\newtheorem{theorem}{Theorem}[section]
\newtheorem{corollary}[theorem]{Corollary}
\newtheorem{prop}[theorem]{Proposition}
\newtheorem{remark}[theorem]{Remark}
\newtheorem{lemma}[theorem]{Lemma}
\theoremstyle{definition}
\theoremstyle{definition}
\newtheorem{definition}[theorem]{Definition}
\DeclareMathOperator{\R}{\mathbb{R}}
\DeclareMathOperator{\N}{\mathbb{N}}
\DeclareMathOperator{\Real}{\text{Re}}
\DeclareMathOperator{\Tr}{Tr}
\DeclareMathOperator{\pseudoxi}{\langle\xi\rangle}
\date{\today}
\title[Vector Valued G\r{a}rding Inequality on compact Lie groups]{Vector Valued G\r{a}rding Inequality for Pseudo-differential Operators on Compact Homogeneous Manifolds}
\author{André Kowacs and Michael Ruzhansky}
\thanks{The authors are supported  by the FWO  Odysseus  1  grant  G.0H94.18N:  Analysis  and  Partial Differential Equations, by the Methusalem programme of the Ghent University Special Research Fund (BOF)
(Grant number 01M01021) and by the FWO grant G011522N. Michael Ruzhansky was also supported by EPSRC grant EP/V005529/1. Andr\'e Kowacs was supported in part by the Coordenação de Aperfeiçoamento de Pessoal de N\'ivel Superior - Brasil (CAPES) - Finance Code 001”;}
\subjclass{Primary 22E30, 43A77. Secondary 58J40, 35S10}
\keywords{G\aa rding inequality, Compact Lie group, Pseudo-differential operator, Fourier Analysis.}
\begin{document}

\maketitle

\begin{abstract}
    We prove sufficient conditions in order to obtain a sharp G\aa rding inequality for pseudo-differential operators acting on vector-valued functions on compact Lie groups. As a consequence, we obtain a sharp G\aa rding inequality for compact homogeneous vector bundles and compact homogeneous manifolds. The sharp G\aa rding inequality is the strongest lower bound estimate known to hold for systems on $\R^n$, and the aim of this paper is to extend this property to systems on compact Lie groups and compact homogeneous manifolds. Our results extend previous works by Lax and Nirenberg [Comm. Pure Appl. Math., Vol. 8, 129-209, (1966)], and by Ruzhansky and Turunen [J. Funct. Anal., Vol. 267, 144-172, (2011)]. As an application, we establish existence and uniqueness of solution to a class of systems of initial value problems of pseudo-differential equations on compact Lie groups and compact homogeneous manifolds.
\end{abstract}
{
\hypersetup{linkcolor=red}
\tableofcontents}
\section{Introduction}
The so called G\aa rding inequality was first proved by  G\aa rding in his paper \cite{Gaard} and can be stated as follows:\\
{\it Let $P$ be an elliptic self-adjoint pseudo-differential operator of order $m\in\R$ on an open set $U\subset \R^n$. Then, for any compact set $Q\subset U$ and $\gamma<m/2$, there exist constants $c_{\gamma,Q},\,C_{\gamma,Q}>0$ such that}
\begin{equation*}
    (Pu,u)_{L^2}\geq c_{\gamma,Q}\|u\|^2_{H^{m/2}}-C_{\gamma,Q}\|u\|^2_{H^{\gamma}},
\end{equation*}
for every $u\in C_0^\infty(Q)$.
    In order to apply it to different problems, Hörmander \cite{Hor} and Lax and Nirenberg \cite{Nire} then adapted this result and proved the so called sharp G\aa rding inequality for pseudo-differential operators for symbols in the Kohn-Nirenberg class $S^m(\R^{2n})$. Their result states that if a symbol $p\in S^m(\R^{2n})$, for $m\in\R$ satisfies $\Real p(x,\xi)\geq 0$, then the pseudo-differential operator $p(x,D)$ associated to this symbol satisfies the estimate
    \begin{equation*}
        \Real(p(x,D)u,u)_{L^2(\R^n)}\geq -C\|u\|_{H^{\frac{m-1}{2}}}^2,
    \end{equation*}
    for some $C>0$ and every $u\in H^{\frac{m-1}{2}}(\R^n)$, where the norm on the right hand side of the inequality denotes the usual norm in the Sobolev space $H^{\frac{m-1}{2}}(\R^n)$. Since then, this result has been extended, by many authors, to different settings and symbol classes. In particular, this type of inequality has been extended to the global symbol classes on compact Lie groups for scalar valued functions. In this paper, inspired by the works on G\aa rding inequality on compact Lie group \cite{RuzWirth}, %of Ruzansky, Turunen 
    on sharp G\aa rding inequality on compact Lie groups \cite{RuzSharp}, %Duvan, Federico, Ruzhanksy 
    on sharp G\aa rding inequality for subelliptic operators on compact Lie groups \cite{SharpSub} and %Cardona, Delgado, Ruzhansky 
    on G\aa rding inequality on graded Lie groups \cite{DuvanJulioGraded}, we prove a type of sharp G\aa rding inequality for pseudo-differential operators acting on vector-valued functions on compact Lie groups. In order to do so, we also extend the notion of pseudo-differential operators defined by amplitudes to the vector-valued compact Lie group setting. As a consequence, we obtain a sharp G\aa rding inequality for pseudo-differential operators on compact homogeneous vector bundles and compact homogeneous manifolds. 
    Finally, in the end we present an application of these result proving existence and uniqueness of solution to a class of systems of vector valued Cauchy problems of pseudo-differential equations. It is worth mentioning that the symbol classes considered are the global symbol classes defined in \cite{RuzPseudo2010}. These classes of symbols coincide with the usual Hörmander local symbol classes on manifolds in the range that the latter are well defined, that is $0\leq \delta<\rho\leq 1$, $\rho\geq 1-\delta$. The global symbols for compact Lie groups developed in \cite{RuzPseudo2010} allow us to extend this range to include the cases $\rho<1-\delta$, and thus are more general to deal with. It is also relevant to mention that the sharp G\aa arding inequality is the strongest lower bound estimate known to hold for systems on $\R^n$ (vector-valued functions). The aim of this paper is to extend this property for global quantisation of operators on compact Lie groups.
    \\
    This paper is organized as follows:\\
    \indent In Section \ref{Prelim} we recall some of the theory about pseudo-differential operators on compact Lie groups. Next we also develop and prove some results concerning amplitudes for pseudo-differential operators acting on vector-valued functions on compact Lie groups. At the end of this section, we also recollect some basic facts about parity of functions on Lie groups.\\
    \indent In Section \ref{MainRes} state and prove our mains results. We also prove a few lemmas which aid in its proofs.\\
    \indent Finally, in Section \ref{PDE} we present an application of our main result, in which we prove existence and uniqueness of solution to a certain class of systems of pseudo-differential equations.
    
\section{Preliminaries}\label{Prelim}
\subsubsection{The group Fourier transform} Let $G$ be a compact Lie group. Denote by $\text{Rep}(G)$ the set of all continuous unitary representations of $G$, that is, the set of all continuous $\xi\in \text{Hom}(G,\text{Aut}(\mathcal{H}_\xi))$, where $\mathcal{H}_\xi$ is some Hilbert space and $\xi(g)$ is unitary for every $g\in G$. It is well known that for all $\xi\in\text{Rep}(G)$, the spaces $\mathcal{H}_\xi$ are finite dimensional, and we denote their dimensions by $d_\xi$. Define the equivalence relation $\xi\sim\eta$ if there exists a bijection $A:\mathcal{H}_\xi\to H_\eta$ such that $\eta(g)A=A\xi(g)$, for all $g\in G$. The set of all such equivalence classes, denoted by $\widehat{G}$, is known as the unitary dual of $G$. As every class $[\xi]\in\widehat{G}$ is finite dimensional, we can always choose a representative that is matrix-valued. It follows from the Peter-Weyl Theorem that the collection of all coefficient functions of elements of $\widehat{G}$ is an orthogonal basis for $L^2(G)$, where integration is taken with respect to the Haar measure in $G$. This motivates the definition of the matrix-valued Fourier coefficients of $f\in L^2(G)$ by 
\begin{equation*}
    \widehat{f}(\xi)\doteq \int_G f(x)\xi(x)^*dx,
\end{equation*}
for $\xi\in\text{Rep}(G)$. This gives the Fourier inversion formula as
\begin{equation*}
    f(x)=\sum_{[\xi]\in\widehat{G}} d_\xi\Tr\left(\xi(x)\widehat{f}(\xi)\right),
\end{equation*}
for almost every $x$ in $G$, where in the sum we choose a single representative for each class in $\widehat{G}$. As in the Euclidean case, there is also an analogue of Plancherel's Theorem, namely
\begin{equation*}
    \|f\|_{L^2(G)}=\left(\int_G|f(x)|^2dx\right)^{\frac{1}{2}}=\left(\sum_{[\xi]\in \widehat{G}}d_\xi\|\widehat{f}(\xi)\|_{HS}^2\right)^{\frac{1}{2}}=\|\widehat{f}\|_{L^2(\widehat{G})},
\end{equation*}
for any $f\in L^2(G)$, where $\|A\|_{HS}^2\doteq \Tr(A^*A)$ for any matrix $A$.
When $G$ is also a Lie group, these definitions can be extended to the set $\mathcal{D}'(G)$ of distributions on $G$, through the formula
\begin{equation*}
    \widehat{u}(\xi)\doteq\langle u,\xi^*\rangle,
\end{equation*}
for $u\in\mathcal{D}'(G)$ and $\xi\in \text{Rep}(G)$, where evaluation should be understood coefficient-wise.

\subsubsection{The quantisation on compact Lie groups}

Now assume that $G$ is also a Lie group. Given a continuous linear operator $L:C^\infty(G)\to C^\infty(G)$, define its matrix-valued global symbol $\sigma_L$ by
\begin{equation*}
    \sigma_L(x,\xi)\doteq \xi(x)^*L\xi(x),
\end{equation*}
for all $(x,[\xi])\in G\times\widehat{G}$. In \cite{RuzPseudo2010} the authors proved the quantisation formula
\begin{equation}\label{quantisation} Lf(x)=\sum_{[\xi]\in\widehat{G}}d_\xi\Tr\left[\xi(x)\sigma_L(x,\xi)\widehat{f}(\xi)\right]
\end{equation}
for all $x\in G$, $f\in C^\infty(G)$.
In order to classify such symbols, a difference operator of order $k\in\mathbb{N}$,  $\Delta_q$ on $\widehat{G}$ , where $q\in C^\infty(G)$ vanishes of order $k$ at the group identity $e=e_G$, is defined by 
\begin{equation*}
    \Delta_q \widehat{f}(\xi)\doteq \widehat{qf}(\xi),
\end{equation*}
for every $[\xi]\in\widehat{G}$.\\
A family of difference operators $\{\Delta_{q_1},\dots,\Delta_{q_r}\}$ of order $1$ is said to be strongly admissible if 
\begin{equation*}
    \text{rank}\{\nabla q_{j}(e)|1\leq j\leq r\}=\dim(G)
\end{equation*}
and
\begin{equation*}
    \bigcap_{j=1}^{r}\{x\in G| q_j(x)=0\}=\{e_G\}.
\end{equation*}
We fix such a family and define the multi-index notation for higher order difference operators by
\begin{equation*}
    q_{\alpha}\doteq q_{1}^{\alpha_1}\dots q_{k}^{\alpha_k},
\end{equation*}
\begin{equation*}
\Delta^\alpha_\xi\doteq\Delta_{q_1}^{\alpha_1}\dots \Delta_{q_r}^{\alpha_r}\equiv \Delta_{q_{\alpha}},
\end{equation*}
for a multi-index $\alpha$. We recall (see \cite{SharpSub}) that we may choose such family so that an analog of the Leibniz's rule holds as follows:

\begin{prop}[Leibniz's like formula for Difference Operators]\label{leibniz}
    For any multi-index $\alpha$ there exist constants $C_{\lambda,\mu}\geq 0$ such that
    \begin{equation*}
        \Delta_\xi^{\alpha}[\widehat{f}(\xi)\widehat{g}(\xi)]=\sum_{|\mu|,|\lambda|\leq |\alpha|\leq |\lambda+\mu|}C_{\lambda,\mu}(\Delta_\xi^{\lambda}\widehat{f}(\xi))(\Delta_\xi^{\mu}\widehat{g}(\xi)),
    \end{equation*}
    for any $f,g\in\mathcal{D}'(G)$, and all $[\xi]\in\widehat{G}$.
\end{prop}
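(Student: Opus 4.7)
The plan is to reduce the Leibniz-type identity on the Fourier side to a Taylor-type product expansion of the group-translated functions $q_\alpha(yz)$. First, I would use the elementary convolution identity on $G$: with the convention $\widehat f(\xi)=\int_G f(x)\xi(x)^*\,dx$, a direct change of variable $x=yz$ in the convolution integral gives $\widehat{g*f}(\xi)=\widehat f(\xi)\widehat g(\xi)$, initially for smooth $f,g$ and then by continuity (in the sense of convolution of distributions on a compact group) for $f,g\in\mathcal{D}'(G)$. Applying the definition of $\Delta_\xi^\alpha$, this yields
\[
\Delta_\xi^\alpha\bigl[\widehat f(\xi)\widehat g(\xi)\bigr] \;=\; \widehat{q_\alpha\cdot(g*f)}(\xi) \;=\; \int_G\!\!\int_G q_\alpha(yz)\,g(y)\,f(z)\,\xi(z)^*\xi(y)^*\,dy\,dz.
\]

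The crux is then to establish, for the specific strongly admissible family being fixed, the finite exact expansion
\[
q_\alpha(yz) \;=\; \sum_{|\lambda|,|\mu|\leq|\alpha|\leq|\lambda+\mu|} C_{\lambda,\mu}^{\alpha}\,q_\lambda(y)\,q_\mu(z), \qquad y,z\in G.
\]
The key observation is that one may take the order-one generators to be of the form $q_{ij}(x):=\pi_{ij}(x)-\delta_{ij}$, where $\pi$ is (a direct sum of) faithful finite-dimensional representations of $G$ whose matrix-coefficient differentials at $e$ span $T_e^*G$; strong admissibility of this family is classical. For such a choice, the multiplicativity $\pi(yz)=\pi(y)\pi(z)$ directly gives
\[
q_{ij}(yz) \;=\; q_{ij}(y) + q_{ij}(z) + \sum_{k} q_{ik}(y)\,q_{kj}(z),
\]
which is exactly the case $|\alpha|=1$ with the stated degree bounds. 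The general case then follows by multiplying $|\alpha|$ such identities and distributing: each resulting monomial $q_\lambda(y)q_\mu(z)$ satisfies $|\lambda|,|\mu|\leq|\alpha|$, because each of the $|\alpha|$ factors contributes at most one power to each of $y$ and $z$, while $|\lambda|+|\mu|\geq|\alpha|$, because each factor contributes at least one power in total.

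To conclude, I would plug this expansion into the double integral above, apply Fubini to separate the $y$- and $z$-integrations, and recognise the resulting single integrals as $\Delta_\xi^\mu\widehat f(\xi)$ and $\Delta_\xi^\lambda\widehat g(\xi)$ respectively. After a harmless relabelling $\lambda\leftrightarrow\mu$ to align with the order of matrix multiplication appearing in the statement, the claimed identity follows with explicitly non-negative constants $C_{\lambda,\mu}\geq 0$ inherited from the expansion coefficients $C_{\lambda,\mu}^\alpha$.

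The main obstacle is the sharp degree control in the expansion of $q_\alpha(yz)$: a purely abstract Taylor expansion on $G\times G$ based only on vanishing orders of smooth functions would yield such a decomposition only modulo higher-order remainders, not a finite exact identity, and would not automatically guarantee both $|\lambda|,|\mu|\leq|\alpha|$ and $|\lambda+\mu|\geq|\alpha|$ simultaneously. The exact finite expansion with these sharp bounds relies critically on the concrete algebraic choice of the admissible family coming from matrix coefficients of a faithful representation, as developed in \cite{SharpSub}. Everything else in the argument is bookkeeping around Fubini and the definition of $\Delta_\xi^\alpha$.
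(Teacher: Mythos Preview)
Your argument is correct and is essentially the standard proof of this statement. Note that the paper does not actually prove Proposition~\ref{leibniz}: it merely records it as a known fact, with the sentence ``We recall (see \cite{SharpSub}) that we may choose such family so that an analog of the Leibniz's rule holds as follows,'' and then states the proposition without proof. What you have written is precisely the argument behind that citation: take the admissible family to consist of all $q_{ij}(x)=\pi_{ij}(x)-\delta_{ij}$ for a faithful (sum of) representation(s) $\pi$, use $\pi(yz)=\pi(y)\pi(z)$ to obtain the exact finite expansion of $q_\alpha(yz)$ with the sharp degree bounds, and then unwind via the convolution identity $\widehat{g*f}=\widehat f\,\widehat g$. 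Your degree-counting (each of the $|\alpha|$ factors contributes at most one to each of $|\lambda|,|\mu|$ and at least one to $|\lambda|+|\mu|$) is exactly what gives $|\lambda|,|\mu|\le|\alpha|\le|\lambda+\mu|$, and the coefficients are manifestly nonnegative integers. So there is nothing to compare: your proposal \emph{is} the proof the paper is citing.
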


Next, recall that the vector space of left-invariant vector fields of $G$, denoted by $\mathfrak{g}$, may be identified with the tangent space at the identity $e$, and each of its non-zero elements corresponds to a first-order differential operator $X:C^\infty(G)\to C^\infty(G)$ given by
\begin{equation*}
    Xf(x)\doteq \frac{d}{dt}\bigg|_{t=0}f(x\exp(tX)),
\end{equation*}
for $x\in G$, where $\exp:\mathfrak{g}\to G$ denotes the standard Lie exponential mapping. If $\{X_1,\dots,X_n\}$ is a basis for $\mathfrak{g}$, we will use the standard multi-index notation 
\begin{equation*}
    \partial^\alpha=X_1^{\alpha_1}\dots X_n^{\alpha_n},
\end{equation*}
for a standard left-invariant differential operator of order $|\alpha|$. Notice that, in general, these operators do not commute, hence to represent all differential operators of a certain order, one would need to specify their order of composition. As this particular order will not be relevant, we will consider $\partial^\alpha$ as denoting an arbitrary differential operator of order $|\alpha|$.\\
Consider now the positive Laplace-Beltrami operator of $G$, $\mathcal{L}_G$. This is a second order bi-invariant differential operator and the coefficient functions $\{\xi_{ij}\}_{i,j=1}^{d_\xi}$ of $[\xi]\in\widehat{G}$ are all eigenfunctions correponding to the same respective eigenvalue $\lambda_{[\xi]}\geq0$. Defining the weight
\begin{equation*}
    \langle\xi\rangle\doteq (1+\lambda_{[\xi]})^{1/2},
\end{equation*}
corresponding to the eigenvalues of the first order differential operator $(\text{Id}+\mathcal{L}_G)^{\frac{1}{2}}$. We use these to describe the symbol classes as follows.

\begin{definition}
Let $G$ be a compact Lie group, $m\in\R$, $0\leq \rho,\delta\leq 1$. We shall say that a matrix-valued function
\begin{equation*}
    \sigma:G\times\widehat{G}\to\bigcup_{[\xi]\in\widehat{G}}\mathbb{C}^{d_\xi\times d_\xi},
\end{equation*}
% such that there exists a family of distributions $k_x\in\mathcal{D}'(G)$,
smooth in $x\in G$, is in the symbol class $\mathcal{S}_{\rho,\delta}^m(G)$ if
\begin{equation*}
    \|\partial^\beta_x\Delta^\alpha_\xi\sigma(x,\xi)\|_{op}\leq C_{\alpha,\beta}\langle\xi\rangle^{m-\rho|\alpha|+\delta|\beta|},
\end{equation*}
for every multi-index $\alpha,\beta$ and for all $(x,[\xi]) \in G\times\widehat{G}$. For $\sigma_L\in \mathcal{S}_{\rho,\delta}^m(G)$ we will write $L\in \Psi^m_{\rho,\delta}(G)\equiv\text{Op}(\sigma_L)$ for the continuous linear operator defined by formula \eqref{quantisation}.
\end{definition}
The operators associated with such symbol classes are called pseudo-differential operators, the real number $m$ is said to be its order. \\
As $G$ is in particular also a smooth manifold, one could have defined the Hormander classes of pseudo-differential operators by local coordinate systems, denoted $\Psi_{\rho,\delta}^m(G,\text{loc})$ for $m\in\R$, $0\leq\delta<\rho\leq 1$, $\rho\geq 1-\delta$. In fact, the first definition extends the second in the following sense.
\begin{theorem} [Equivalence of classes, \cite{RuzPseudo2010},\cite{VerIntri}]
Let $L:C^\infty(G)\to C^\infty(G)$ be a continuous linear operator and $m\in R$, $0\leq\delta<\rho\leq 1$, $\rho\geq 1-\delta$. Then, $L\in \Psi_{\rho,\delta}^m(G,\text{loc})$ if, and only if, $\sigma_L\in\mathcal{S}_{\rho,\delta}^m$. Consequently, 
\begin{equation*}
    \Psi_{\rho,\delta}^m(G,\text{loc})
    =\Psi^m_{\rho,\delta}(G),\, 0\leq\delta<\rho\leq 1,\, \rho\geq 1-\delta, \,m\in\R. 
\end{equation*}
\end{theorem}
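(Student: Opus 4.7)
The plan is to prove both inclusions by translating between the global representation-theoretic quantisation and the local Hörmander description in coordinate charts, exploiting that the matrix coefficients $\xi_{ij}$ of $[\xi]\in\widehat{G}$ are eigenfunctions of $I+\mathcal{L}_G$ with eigenvalue $\langle\xi\rangle^2$, so in local coordinates they behave like oscillations of frequency $\sim\langle\xi\rangle$. I would first fix a finite atlas of $G$ with a subordinate smooth partition of unity to reduce to pieces of $L$ supported in a single chart. For the forward direction ($L\in\Psi^m_{\rho,\delta}(G,\text{loc})\Rightarrow\sigma_L\in\mathcal{S}^m_{\rho,\delta}$), the key step is to compute $\sigma_L(x,\xi)=\xi(x)^*L\xi(x)$ by inserting the local Kohn--Nirenberg representation of $L$ and Taylor-expanding in the exponential chart, identifying $\sigma_L(x,\xi)$ to leading order with the local symbol $p(x,\eta)$ evaluated at some $\eta=\Xi(\xi)$ with $|\Xi(\xi)|\sim\langle\xi\rangle$. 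Derivatives $\partial^\beta_x$ then transfer to the $x$-argument of $p$, producing the $\langle\xi\rangle^{\delta|\beta|}$ growth; difference operators $\Delta_{q_j}^\alpha$ correspond, via $\Delta_{q_j}\widehat{f}=\widehat{q_j f}$, to multiplication by $q_j$ on the kernel side, so Taylor expansion around $e$ (where $q_j$ vanishes to first order) yields the gain $\langle\xi\rangle^{-\rho|\alpha|}$.

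For the converse direction ($\sigma_L\in\mathcal{S}^m_{\rho,\delta}\Rightarrow L\in\Psi^m_{\rho,\delta}(G,\text{loc})$) I would start from the global quantisation formula \eqref{quantisation}, localise via cut-offs, and rewrite the Peter--Weyl sum as an oscillatory integral in the exponential chart using an asymptotic plane-wave expansion of $\xi_{ij}$. The task is then to extract a smooth symbol $p(x,\eta)$ on $T^*U\cong U\times\R^n$ satisfying the Hörmander estimates $|\partial^\alpha_\eta\partial^\beta_x p(x,\eta)|\lesssim(1+|\eta|)^{m-\rho|\alpha|+\delta|\beta|}$, whose Kohn--Nirenberg quantisation agrees with the localised $L$ modulo smoothing. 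The hypothesis $\rho\geq 1-\delta$ is crucial here: it is precisely what ensures that the formal asymptotic expansion of $p$ in inverse powers of $\langle\eta\rangle$ (obtained by iterating difference operators in the role of $\partial^\alpha_\eta$) converges in the Hörmander class, and that the resulting class is invariant when patching across charts.

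The main obstacle is this converse step, namely bridging the \emph{discrete} difference operators on $\widehat{G}$ with the \emph{continuous} $\eta$-derivatives demanded by the Hörmander class: one must match $\|\Delta^\alpha_\xi\sigma_L(x,\xi)\|_{op}\lesssim\langle\xi\rangle^{m-\rho|\alpha|}$ against $|\partial^\alpha_\eta p(x,\eta)|\lesssim(1+|\eta|)^{m-\rho|\alpha|}$ at all frequencies $\eta$, not only at the sampled values $|\eta|\sim\langle\xi\rangle$. This requires a careful interpolation/asymptotic summation controlled by Weyl-law multiplicity bounds on $d_\xi$ and on $\#\{[\xi]:\langle\xi\rangle\leq R\}$ to tame the tail of the Peter--Weyl sum. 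Once this matching is in place, the diffeomorphism invariance of the local class in the range $\rho\geq 1-\delta$, together with the standard symbolic calculus, yields the equality $\Psi^m_{\rho,\delta}(G,\text{loc})=\Psi^m_{\rho,\delta}(G)$.
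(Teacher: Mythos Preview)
The paper does not prove this theorem: it is stated as a quoted result with attribution to \cite{RuzPseudo2010} and \cite{VerIntri}, and no argument is given in the text. There is therefore no ``paper's own proof'' against which to compare your proposal.

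That said, your outline is broadly in the spirit of the proofs in those references, particularly Fischer \cite{VerIntri}: one direction computes $\sigma_L(x,\xi)=\xi(x)^*L\xi(x)$ by feeding matrix coefficients into the local oscillatory-integral description and extracting decay from vanishing of the $q_\alpha$ at $e$; the other rebuilds a Euclidean symbol from the right-convolution kernel $k_{\sigma,x}$ via the exponential chart. Where your sketch is weakest is exactly where the actual proofs are most delicate. The phrase ``asymptotic plane-wave expansion of $\xi_{ij}$'' is not a usable tool here; what the cited references do instead is work with the Schwartz kernel/right-convolution kernel and use Calder\'on--Zygmund-type estimates on $k_{\sigma,x}$ (as in Proposition~6.7 of \cite{VerIntri}, which this paper itself invokes elsewhere) together with dyadic decompositions in $\langle\xi\rangle$, rather than attempting to interpolate the discrete samples $\langle\xi\rangle$ to a continuous $\eta$-variable directly. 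Your proposed ``interpolation controlled by Weyl-law multiplicity bounds'' is not how the gap between $\Delta_\xi^\alpha$ and $\partial_\eta^\alpha$ is closed; the argument goes through kernel estimates and Taylor expansions on the group, not through counting eigenvalues. If you want to turn this into an actual proof, that is the step you would need to replace.
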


In this paper, we shall make use of the following lemma, whose proof can be found in \cite{VerIntri}.

\begin{lemma}[Taylor Expansion in Compact Lie Groups]
    Let $G$ be a compact Lie group of dimension $d$, and let $\{\Delta_{q_1},\dots,\Delta_{q_{d}}\}$ be a strongly admissible collection of difference operators. Then there exists a basis of left-invariant vector fields $\{X_1,\dots,X_d\}$ such that
    \begin{equation*}
        X_jq_k(x^{-1})|_{x=e_G}=\delta_{jk},
    \end{equation*}
    for all $1\leq j,k\leq d$. Moreover, by using a multi-index notation relative to both this collection of difference operators, and this basis of left invariant vector fields, any $f\in C^\infty(G)$, can be written as
    \begin{equation*}
        f(xy)=f(x)+\sum_{1\leq |\alpha|< N} \frac{1}{\alpha!}\partial^{\alpha}f(x)q_{\alpha}(y^{-1}) + R_{x,N}^f(y),
    \end{equation*}
    for any $x,y\in G$, $N\in \N$, where the last term is referred to as the Taylor remainder of order $N$ and satisfies
    \begin{equation*}
        |R_{x,N}^f(y)|\lesssim |y|^N \max_{|\alpha|\leq N}\|\partial^\alpha f\|_{L^\infty(G)}.
    \end{equation*}
\end{lemma}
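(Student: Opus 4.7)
\emph{Constructing the basis.} To arrange $X_j q_k(\cdot^{-1})|_{e_G}=\delta_{jk}$, I would translate this into a dual-basis statement at the identity. Since group inversion has differential $-\mathrm{Id}$ on $T_{e_G}G$, setting $\tilde q_k(x):=q_k(x^{-1})$ gives $d\tilde q_k|_{e_G}=-dq_k|_{e_G}$. The strong admissibility hypothesis $\mathrm{rank}\{\nabla q_j(e_G)\}=d$ then makes $\{d\tilde q_k|_{e_G}\}_{k=1}^d$ a basis of $T_{e_G}^*G$, so a dual basis $\{v_j\}\subset T_{e_G}G$ exists with $v_j(\tilde q_k)=\delta_{jk}$. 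Defining $X_j$ as the left-invariant vector field with $X_j|_{e_G}=v_j$ gives the required identity.

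\emph{Taylor formula in exponential coordinates.} For $x\in G$ fixed and $y=\exp(Y)$ with $Y=\sum_j s_j X_j\in\mathfrak{g}$, apply the one-variable Taylor formula to $t\mapsto f(x\exp(tY))$ at $t=0$:
\begin{equation*}
f(x\exp Y)=\sum_{k=0}^{N-1}\frac{1}{k!}(Y^kf)(x)+\frac{1}{(N-1)!}\int_0^1(1-t)^{N-1}(Y^Nf)(x\exp(tY))\,dt.
\end{equation*}
Expanding $Y^k=(\sum_j s_j X_j)^k$ produces $s^\alpha$ multiplied by symmetric compositions $X^{\alpha,\mathrm{sym}}$ of total order $|\alpha|=k$, while the Lagrange remainder is already bounded by $|Y|^N\max_{|\alpha|=N}\|\partial^\alpha f\|_{L^\infty(G)}$.

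\emph{Two normalizations.} Each $X^{\alpha,\mathrm{sym}}$ differs from the fixed ordering $\partial^\alpha$ by commutators, which are left-invariant operators of strictly lower order, and the resulting corrections can be absorbed into the lower-order terms of the expansion. Next, the duality relation together with $q_k(e_G)=0$ gives $q_k(y^{-1})=s_k+O(|s|^2)$ as a real-analytic function of $s$ near $0$; inverting this order by order produces
\begin{equation*}
s^\alpha = q_\alpha(y^{-1}) + \sum_{|\beta|>|\alpha|} c_{\alpha\beta}(y)\,q_\beta(y^{-1})
\end{equation*}
with smooth bounded coefficients $c_{\alpha\beta}$. Substituting these two identities back into the Taylor formula and regrouping by total order moves all terms with $|\alpha|\geq N$ into a single remainder $R^f_{x,N}(y)$.

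\emph{Remainder estimate and main obstacle.} Since $|s|\asymp|y|$ near $e_G$ (because $\exp$ has identity differential at $0$), the Lagrange bound combined with the boundedness of the $c_{\alpha\beta}$ yields $|R^f_{x,N}(y)|\lesssim|y|^N\max_{|\alpha|\leq N}\|\partial^\alpha f\|_{L^\infty(G)}$ on a neighborhood of $e_G$; for $y$ bounded away from $e_G$, $|y|$ is bounded below by compactness of $G$ and the estimate is automatic. I expect the main technical difficulty to be the inversion step $s\leftrightarrow q(\cdot^{-1})$: one must verify that its formal inversion yields only corrections of the form $q_\beta(y^{-1})$ with $|\beta|\geq|\alpha|$ and smooth bounded coefficients, which is precisely what the dual choice of $\{X_j\}$ made in the first step guarantees.
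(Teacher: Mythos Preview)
The paper does not actually prove this lemma: it introduces the statement with the remark ``whose proof can be found in \cite{VerIntri}'' and uses it as a black box throughout. So there is no in-paper argument to compare against.

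Your sketch is a reasonable outline of the standard proof (essentially the one in Fischer's paper \cite{VerIntri}): pick the left-invariant frame dual to $\{dq_k(\cdot^{-1})|_{e_G}\}$, write the one-parameter Taylor expansion of $t\mapsto f(x\exp(tY))$, then convert from the exponential coordinates $s$ to the functions $q_\alpha(y^{-1})$ using that the latter form a local coordinate system near $e_G$. The two normalization steps you identify---reordering $X^{\alpha,\mathrm{sym}}$ into $\partial^\alpha$ modulo lower-order commutators, and replacing $s^\alpha$ by $q_\alpha(y^{-1})$ modulo higher-order corrections---are the right ones, and your remainder argument (local estimate via $|s|\asymp|y|$, global estimate via compactness) is the standard way to close. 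One point worth making precise: the identity you write as $s^\alpha=q_\alpha(y^{-1})+\sum_{|\beta|>|\alpha|}c_{\alpha\beta}(y)q_\beta(y^{-1})$ should be understood as a finite expansion up to order $N$ with a smooth remainder vanishing to order $N$ at $e_G$ (Hadamard's lemma in the $q_k(\cdot^{-1})$-coordinates), rather than an infinite sum; otherwise the ``smooth bounded coefficients'' claim needs justification. With that caveat, the argument is sound.
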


\subsection{The vector-valued quantisation on compact Lie groups}
Let $E_0$ be a $n$-dimensional $\mathbb{C}$-vector space, where $n\in \N$. Consider $B_0=\{e_1,\dots,e_n\}$ an orthonormal basis on $E_0$. We may identify $E_0\cong \mathbb{C}^n\cong \mathbb{C}^{n\times1}$ by identifying each element in $B_0$ with the canonical basis in $\mathbb{C}^n$. Hence, given a mapping $f:G\to E_0$, we may write it as 
\begin{equation*}
    f(x)=(f_1(x),\dots,f_n(x))^T\in\mathbb{C}^{n\times 1},
\end{equation*}
 where $f_j(x)=\langle f(x),e_{j}\rangle_{E_0}\in\mathbb{C}$, for each $x\in G$. Following \cite{DuvanHomo}, for $f\in L^1(G,E_0)$ we define the Fourier coefficients of $f$ at $[\xi]\in \widehat{G}$ by 
 \begin{equation*}
     \widehat{f}(\xi)=(\widehat{f}_1(\xi),\dots,\widehat{f}_n(\xi))^T,
 \end{equation*}
 where  $[\xi]\in\widehat{G}$. If $A:C^\infty(G,E_0)\to C^\infty(G,E_0)$ is a continuous linear operator, then by defining its matrix-valued symbol 
\begin{equation}\label{symbol}
    \sigma_A(i,r,x,\xi) = \xi(x)^*e_r^*[A(\xi\otimes e_i)(x)],
\end{equation}
for $1\leq i,r\leq n$, $x\in G,[\xi]\in\widehat{G}$, it was proved in \cite{DuvanHomo} that the quantisation formula
\begin{equation}\label{quantisation1}
    Af(x) = \sum_{i,r=1}^n\sum_{[\xi]\in\widehat{G}}d_\xi\Tr[\xi(x)\sigma_A(i,r,x,[\xi])\widehat{f}_i(\xi)]e_r,
\end{equation}
holds true for every $x\in G$, $f\in C^\infty(G,E_0)$. Equivalently, one can also write it as 
\begin{equation*}
      Af(x) = \left(\sum_{i=1}^n\sum_{[\xi]\in\widehat{G}}d_\xi\Tr[\xi(x)\sigma_A(i,1,x,[\xi])\widehat{f}_i(\xi)],\dots,\sum_{i=1}^n\sum_{[\xi]\in\widehat{G}}d_\xi\Tr[\xi(x)\sigma_A(i,n,x,[\xi])\widehat{f}_i(\xi)]\right)^T,
\end{equation*}
or even,
\begin{equation*}
    Af(x) = \sum_{[\xi]\in\widehat{G}}d_\xi\Tr[\xi(x)^{\otimes n}\sigma_A(x,[\xi])\widehat{f}(\xi)],
\end{equation*}
where $\sigma_A(x,[\xi])\in (\mathbb{C}^{d_\xi\times d_\xi})^{n\times n}$ is given by:
\begin{equation*}
    \sigma_A(x,[\xi])_{ir}=\sigma_A(i,r,x,[\xi]),
\end{equation*}
for $1\leq i,r\leq n$, $(x,[\xi])\in G\times \widehat{G}$, and $\xi(x)^{\otimes n}$ is the block diagonal $(\mathbb{C}^{d_\xi\times d_\xi})^{n\times n}$-matrix given by
\begin{equation}
    \xi(x)^{\otimes n}=\text{diag}(\xi(x),\dots,\xi(x)),
\end{equation}
for all $(x,[\xi])\in G\times\widehat{G}$. Finally, for a matrix vector $v$ the trace above should be understood component-wise, i.e.: $(\Tr(v))_{j}=\Tr(v_{j})$, for $1\leq j\leq n$.\\
As before, we introduce the symbol classes with respect to these quantisation formulas.
\begin{definition}
    Let $J_n\doteq\{1,\dots,n\}$, $0\leq \rho,\delta\leq 1$. A symbol $\sigma\in\mathcal{S}_{\rho,\delta}^m(G\otimes\text{End}(E_0))$ is a mapping from $J_n\times J_n\times G\times \text{Rep}(G)$, smooth in $x$, such that, for any $\xi\in\text{Rep}(G)$ and $i,r\in J_n,\, \sigma(i,r,x,\xi)\in \mathbb{C}^{d_\xi\times d_\xi}$, and for any admissible collection of difference operators $\Delta_\xi^\alpha$, it satisfies
    \begin{equation*}
\|\Delta_\xi^\alpha\partial_x^\beta \sigma(i,r,x,\xi)\|_{op}\leq C_{\alpha\beta}\pseudoxi^{m-\rho|\alpha|+\delta|\beta|},
    \end{equation*}
    for some $C_{\alpha,\beta}>0$, for all multi-indices $\alpha,\beta$, and all $(i,r,x,[\xi])\in J_n\times J_n\times G\times \widehat{G}$.\\
    For a symbol $\sigma$, its associated operator $\text{Op}(\sigma):C^\infty(G,E_0)\to \mathcal{D}'(G,E_0)$ is defined by
    \begin{equation}\label{defAmpli1}
        (\text{Op}(\sigma)u)_r(x)\doteq
        \sum_{[\eta]\in\widehat{G}}d_\eta\Tr\left[\eta(x)\sum_{i=1}^n\sigma(i,r,x,\eta)\widehat{u_i}(\eta)\right],
    \end{equation}
    for every $1\leq r\leq n$. We then say $\text{Op}(\sigma)$ is a vector-valued pseudo-differential operator of order $m$, and write $\text{Op}(\sigma)\in\Psi^{m}_{\rho,\delta}(G\otimes\text{End}(E_0))$. 
\end{definition}
\noindent The vector-valued Sobolev space $H^s(G,E_0)$, $s\in \R$, is defined, as the completion of $C^\infty(G,E_0)$ under the Sobolev norm
\begin{equation*}
    \|u\|_{H^s(G,E_0)}^2=\sum_{[\xi]\in\widehat{G}}d_\xi\langle\xi\rangle^{2s}\sum_{i=1}^{d_\tau}\|\widehat{u}(i,\xi)\|_{HS}^2.
\end{equation*}The following theorem will be useful and its proof can be found in \cite{DuvanHomo}.

\begin{theorem}\label{pseudobounded}
    Let $A:C^\infty(G,E_0)\to C^\infty(G,E_0)$ be a continuous linear operator with symbol $a\in\mathcal{S}_{\rho,\delta}^m(G\otimes\text{End}(E_0))$, $0\leq \delta<\rho\leq 1$. Then $A:H^{s}(G,E_0)\to H^{s-m}(G,E_0)$ extends to a bounded operator for all $s\in\R$.
\end{theorem}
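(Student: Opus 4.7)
The plan is to reduce the vector-valued statement to the already-known scalar Sobolev boundedness theorem for pseudo-differential operators on compact Lie groups from \cite{RuzPseudo2010}. The key observation is that the symbol $\sigma=\sigma_A$, by its very definition as an element of $\mathcal{S}_{\rho,\delta}^m(G\otimes\text{End}(E_0))$, is nothing but an $n\times n$ array of scalar-valued symbols in $\mathcal{S}_{\rho,\delta}^m(G)$, one for each coordinate pair $(i,r)\in J_n\times J_n$. Likewise, the vector-valued Sobolev norm
\[
\|u\|_{H^s(G,E_0)}^2=\sum_{[\xi]\in\widehat{G}}d_\xi\langle\xi\rangle^{2s}\sum_{i=1}^{n}\|\widehat{u_i}(\xi)\|_{HS}^2=\sum_{i=1}^n\|u_i\|_{H^s(G)}^2
\]
decouples into a sum of scalar Sobolev norms of the components $u_i=\langle u,e_i\rangle_{E_0}$, via the chosen orthonormal basis $B_0$ of $E_0$ and Plancherel's theorem on $G$.

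With these two reductions in hand, the strategy is as follows. First, for each pair $(i,r)\in J_n\times J_n$, set $\sigma_{i,r}(x,\xi):=\sigma(i,r,x,\xi)$ and let $A_{i,r}:=\text{Op}(\sigma_{i,r})$ be the associated scalar pseudo-differential operator. By the symbol inequalities in the definition of $\mathcal{S}_{\rho,\delta}^m(G\otimes\text{End}(E_0))$, each $\sigma_{i,r}$ lies in $\mathcal{S}_{\rho,\delta}^m(G)$, so the scalar Sobolev boundedness theorem for the classes $\mathcal{S}_{\rho,\delta}^m(G)$ with $0\leq\delta<\rho\leq 1$ gives, for each pair,
\[
\|A_{i,r}v\|_{H^{s-m}(G)}\leq C_{i,r,s}\|v\|_{H^s(G)},\qquad v\in H^s(G).
\]

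Next, I would read off from the quantisation formula \eqref{defAmpli1} that the $r$-th scalar component of $Au$ is exactly
\[
(Au)_r(x)=\sum_{i=1}^n \sum_{[\eta]\in\widehat{G}}d_\eta\Tr\left[\eta(x)\sigma_{i,r}(x,\eta)\widehat{u_i}(\eta)\right]=\sum_{i=1}^n A_{i,r}u_i(x).
\]
Applying the triangle inequality in $H^{s-m}(G)$ and then the scalar boundedness, squaring, and summing over $r$ yields
\[
\|Au\|_{H^{s-m}(G,E_0)}^2=\sum_{r=1}^n\|(Au)_r\|_{H^{s-m}(G)}^2\leq n\sum_{i,r=1}^n C_{i,r,s}^2\|u_i\|_{H^s(G)}^2\leq C_s^2\|u\|_{H^s(G,E_0)}^2,
\]
which gives the desired extension by density of $C^\infty(G,E_0)$ in $H^s(G,E_0)$.

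The only serious work, then, is the scalar Sobolev boundedness in $\mathcal{S}_{\rho,\delta}^m(G)$, which is already available. So I do not anticipate a genuine obstacle; the mild point to verify is simply that the vector-valued symbol class and Sobolev norm decouple coordinate-wise in the way claimed above, and that no cross-coupling between different indices $(i,r)$ appears that could worsen the bound. Since the finite sum over $i,r\in J_n\times J_n$ has only $n^2$ terms and each contributes a finite constant, the overall operator norm is controlled, finishing the proof.
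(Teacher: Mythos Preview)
The paper does not supply its own proof of this theorem; it simply states that the proof can be found in \cite{DuvanHomo}. Your componentwise reduction to the scalar case is correct and is the expected argument: the vector-valued symbol class and Sobolev norm both decouple over $i,r\in J_n$ exactly as you describe, the quantisation formula \eqref{defAmpli1} gives $(Au)_r=\sum_{i}A_{i,r}u_i$ with each $A_{i,r}\in\Psi^m_{\rho,\delta}(G)$, and the known scalar Sobolev boundedness for $0\le\delta<\rho\le1$ then finishes the proof via a finite sum and the triangle inequality, just as you wrote.
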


Next, we generalize the concept of symbols to amplitudes, as follows.

\begin{definition}
     An amplitude $a\in\mathcal{A}_{\rho,\delta}^m(G\otimes\text{End}(E_0))$, where $m\in\R$, $0\leq \rho,\delta\leq 1$,  is a mapping from $J_n\times J_n\times G\times G\times\text{Rep}(G)$, smooth in $x$ and $y$, such that, for any $\xi\in\text{Rep}(G),\, a(i,r,x,y,\xi)\in \mathbb{C}^{d_\xi\times d_\xi}$, and for any admissible collection of difference operators $\Delta_\xi^\alpha$, it satisfies
    \begin{equation*}
\|\Delta_\xi^\alpha\partial_x^\beta\partial_y^\gamma a(i,r,x,y,\xi)\|_{op}\leq C_{\alpha\beta\gamma}\pseudoxi^{m-\rho|\alpha|+\delta|\beta+\gamma|},
    \end{equation*}
    for all multi-indices $\alpha,\beta,\gamma$, $(i,r,x,y,[\xi])\in J_n\times J_n\times G\times G\times \widehat{G}$.\\
    For an amplitude $a$, the amplitude operator $\text{Op}(a):C^\infty(G,E_0)\to \mathcal{D}'(G,E_0)$ is defined by
    \begin{equation}\label{defAmpli}
        (\text{Op(a)}u)_r(x)\doteq
        \sum_{[\eta]\in\widehat{G}}d_\eta\Tr\left[\eta(x)\int_G\sum_{i=1}^na(i,r,x,y,\eta)u_i(y)\eta^*(y)\mathop{dy}\right]
    \end{equation}
    for every $1\leq r\leq n$. Notice that if $A$ is a pseudo-differential operator $A:C^\infty(G,E_0)\to C^\infty(G,E_0)$ defined as before, and if $a(i,r,x,y,\eta)=\sigma_A(i,r,x,\eta)$, then $\text{Op}(a)=A$.
\end{definition}

\begin{prop}\label{Propamplitude}
    Let $0\leq \delta<1$ and $0\leq \rho \leq 1$, and $a\in \mathcal{A}_{\rho,\delta}^m(G\otimes\text{End}(E_0))$. Then $\text{Op}(a)$ is a continuous linear operator from $C^\infty(G,E_0)$ to $C^\infty(G,E_0)$. 
\end{prop}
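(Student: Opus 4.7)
My plan is to reduce the vector-valued statement to a scalar one and then prove the latter by integration by parts against the Laplace--Beltrami operator. By \eqref{defAmpli}, the $r$-th component admits the decomposition
\begin{equation*}
(\text{Op}(a) u)_r(x) = \sum_{i=1}^n \bigl(\text{Op}(a_{i,r}) u_i\bigr)(x),
\end{equation*}
where $a_{i,r}(x,y,\eta) := a(i,r,x,y,\eta)$ is a scalar-valued amplitude satisfying the same bounds as $a$, and each $u_i \in C^\infty(G)$. Hence it suffices to verify that if $b$ is a scalar amplitude of order $m$ and type $(\rho,\delta)$, and $v \in C^\infty(G)$, then $\text{Op}(b) v$ is smooth, with continuous dependence on $v$.

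For the scalar case, I would set
\begin{equation*}
F(x,\eta) := \int_G b(x,y,\eta)\, v(y)\, \eta^*(y)\, dy,
\end{equation*}
so that $\text{Op}(b)v(x) = \sum_{[\eta]\in\widehat{G}} d_\eta \Tr[\eta(x) F(x,\eta)]$, and aim at arbitrarily rapid decay of $\|F(x,\eta)\|_{op}$ in $\langle\eta\rangle$. Since $\mathcal{L}_G$ has real coefficients and is bi-invariant, every entry of $\eta^*(y) = \eta(y^{-1})$ is an $\mathcal{L}_G$-eigenfunction with eigenvalue $\lambda_{[\eta]}$, so $(1+\mathcal{L}_G)_y^k \eta^*(y) = \langle\eta\rangle^{2k}\eta^*(y)$. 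By self-adjointness of $1+\mathcal{L}_G$,
\begin{equation*}
F(x,\eta) = \langle\eta\rangle^{-2k}\int_G (1+\mathcal{L}_G)_y^k\bigl[b(x,y,\eta)\, v(y)\bigr]\, \eta^*(y)\, dy.
\end{equation*}
Expanding by Leibniz, the bracketed integrand is a finite sum of products $\partial_y^\gamma b \cdot \partial_y^{\gamma'} v$ with $|\gamma|+|\gamma'|\leq 2k$, so the amplitude estimate gives
\begin{equation*}
\|F(x,\eta)\|_{op} \leq C_k\, \langle\eta\rangle^{m-2k(1-\delta)} \max_{|\gamma'|\leq 2k}\|\partial^{\gamma'} v\|_{L^\infty(G)}.
\end{equation*}
Because $\delta < 1$, the exponent $m-2k(1-\delta)$ can be made as negative as desired; combined with Weyl's estimate $\sum_{[\eta]} d_\eta^2 \langle\eta\rangle^{-N} < \infty$ for $N$ sufficiently large, this yields absolute, uniform-in-$x$ convergence of the series defining $\text{Op}(b)v$.

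To obtain smoothness and continuity, I would differentiate the series termwise in $x$: each $\partial^\beta_x$ splits via Leibniz into $\partial^{\beta_1}_x \eta(x) \cdot \partial^{\beta_2}_x F(x,\eta)$, where the former has polynomial growth $\langle\eta\rangle^{|\beta_1|}$ (since left-invariant derivatives of $\eta(x)$ produce $d\eta(X)$-factors of operator norm $\lesssim\langle\eta\rangle$), and the latter inherits the above bound with $b$ replaced by $\partial^{\beta_2}_x b$, picking up an extra factor $\langle\eta\rangle^{\delta|\beta_2|}$. Choosing $k$ large absorbs all polynomial growth, so every $x$-derivative of the series converges uniformly, giving $\text{Op}(a)u\in C^\infty(G,E_0)$; tracking constants shows that each $C^N$-seminorm of $\text{Op}(a)u$ is controlled by finitely many $C^{N'}$-seminorms of $u$, which is the required continuity.

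The main obstacle is bookkeeping: carefully accounting for the $v$-dependence through repeated integration by parts so as to obtain genuine continuity estimates rather than mere pointwise well-definedness, and arranging the dependence between the derivative orders of $u$ that must be bounded and the target seminorm order $N$ so that everything remains finite once $k$ is chosen.
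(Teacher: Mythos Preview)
Your proposal is correct and follows essentially the same approach as the paper: integration by parts in $y$ against powers of $\langle\eta\rangle^{-2}(1+\mathcal{L}_G)$ to force arbitrary decay in $\langle\eta\rangle$, then uniform/absolute convergence of the $\eta$-series and termwise differentiation in $x$. The paper's proof is a two-line sketch of exactly this mechanism, whereas you have spelled out the reduction to the scalar case, the Leibniz bookkeeping, the Weyl-type summability, and the continuity estimate in more detail.
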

\begin{proof}
    Recall that by definition, $(1+\mathcal{L}_G)\eta=\langle\eta\rangle^2 \eta$. Consequently, integrating by parts in the $dy$-integral in \eqref{defAmpli} with respect to $\langle\eta\rangle^{-2}(1+\mathcal{L}_G)$ arbitrarily many times, we obtain that the $\eta$-series in \eqref{defAmpli} converges uniformly and absolutely, so that $\text{Op}(a)\in C^\infty(G,E_0)$ provided $u\in C^\infty(G,E_0)$. The continuity of  $\text{Op}(a)$ follows similarly.
\end{proof}

\begin{prop}\label{propasymptotic}
    Let $m\in\R$, $0\leq \delta<\rho\leq 1$ and $a\in\mathcal{A}_{\rho,\delta}^m(G\otimes\text{End}(E_0))$. Then $A=\text{Op}(a)$ is a pseudo-differential operator with matrix symbol $\sigma_A\in\mathcal{S}_{\rho,\delta}^m(G\otimes\text{End}(E_0))$. Moreover, $\sigma_A$ has the asymptotic expansion
    \begin{equation*}
        \sigma_A(i,r,x,\xi)\sim\sum_{\alpha\geq 0}\frac{1}{\alpha!}\partial_y^\alpha\Delta_\xi^\alpha a(i,r,x,y,\xi)|_{y=x}
    \end{equation*}
    for each $1\leq i,r,\leq n$, in the sense that 
    \begin{equation*}
        \left(\sigma_A(i,r,x,\xi)-\sum_{0\leq|\alpha|<N}\frac{1}{\alpha!}\partial_y^\alpha\Delta_\xi^\alpha a(i,r,x,y,\xi)|_{y=x}\right)\in \mathcal{S}_{\rho,\delta}^{m-(\rho-\delta)N}(G\otimes\text{End}(E_0)),
    \end{equation*}
    for every $N\in\N$ sufficiently big.
\end{prop}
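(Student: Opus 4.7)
The plan is to adapt the amplitude-to-symbol asymptotic expansion of Ruzhansky--Turunen \cite{RuzPseudo2010} to the vector-valued setting. Because the amplitude $a$ is organized into $n^2$ scalar-matrix-valued components $a(i,r,\cdot,\cdot,\cdot)$ and the amplitude quantisation \eqref{defAmpli} is linear and diagonal in the vector indices $(i,r)\in J_n\times J_n$, the whole argument essentially reduces to the scalar amplitude case applied componentwise, with all symbol seminorm estimates uniform in $i,r$.

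First I would derive an integral representation for $\sigma_A(i,r,x,\xi)$ by unfolding its definition $\sigma_A(i,r,x,\xi) = \xi(x)^* e_r^*[A(\xi\otimes e_i)(x)]$ through the amplitude formula \eqref{defAmpli}. Interchanging the $dy$-integral with the $[\eta]$-series (justified by the smoothing integration-by-parts argument already used in the proof of Proposition \ref{Propamplitude}) and invoking unitarity $\xi(x)^*\xi(y)=\xi(x^{-1}y)$, this produces an integral in which $a(i,r,x,y,\xi)$ is paired with an $\xi(x^{-1}y)$-type factor. I would then apply the Taylor expansion lemma on compact Lie groups to the smooth function $y\mapsto a(i,r,x,y,\xi)$ around $y=x$, using the basis $\{X_1,\dots,X_d\}$ dual to the strongly admissible collection $\{q_1,\dots,q_d\}$, to write
$$a(i,r,x,y,\xi) = \sum_{|\alpha|<N}\frac{1}{\alpha!}\,\partial_y^\alpha a(i,r,x,y,\xi)\big|_{y=x}\,q_\alpha(y^{-1}x) + R^a_{x,N}(y,\xi).$$
Substituted into the symbol representation, the defining identity $\widehat{qf}=\Delta_q\hat f$ converts each factor $q_\alpha(y^{-1}x)$, once integrated against the $\xi(y)^*$-side, into the difference operator $\Delta_\xi^\alpha$ acting on the remaining integrand. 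This produces precisely the principal terms $\frac{1}{\alpha!}\,\partial_y^\alpha\Delta_\xi^\alpha a(i,r,x,y,\xi)|_{y=x}$ and leaves a remainder $r_N$ built from $R^a_{x,N}$.

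The main obstacle is showing $r_N\in\mathcal{S}_{\rho,\delta}^{m-(\rho-\delta)N}(G\otimes\text{End}(E_0))$ for $N$ sufficiently large: a direct sup-norm bound on $R^a_{x,N}$ via the symbol estimates on $\partial_y^\beta a$, $|\beta|=N$, only supplies growth of order $m+\delta N$, which is insufficient. To recover the missing decay I would exploit the explicit structure of the Taylor remainder, writing it (or bounding it) in the form
$$R^a_{x,N}(y,\xi) = \sum_{|\beta|=N} q_\beta(y^{-1}x)\, b_\beta(i,r,x,y,\xi),$$
where each $b_\beta$ satisfies amplitude-type bounds of order $m+\delta N$. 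Absorbing each $q_\beta$ factor into a difference operator $\Delta_\xi^\beta$ via the same $\widehat{qf}=\Delta_q\hat f$ duality then contributes $\langle\xi\rangle^{-\rho|\beta|}=\langle\xi\rangle^{-\rho N}$ per term, producing the net order $m+\delta N-\rho N = m-(\rho-\delta)N$, as required. The full seminorm estimates for $\partial_x^{\beta'}\Delta_\xi^{\gamma'}r_N$ follow by the same reasoning, redistributing $\Delta_\xi^{\gamma'}$ across the product structure via the Leibniz-type rule of Proposition \ref{leibniz}. Since $\rho-\delta>0$, taking $N$ arbitrarily large beats any prescribed symbol order, completing the asymptotic expansion in the sense stated.
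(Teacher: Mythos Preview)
Your setup---the componentwise reduction to scalar amplitudes, the integral representation of $\sigma_A$, the Taylor expansion of $a(i,r,x,y,\xi)$ around $y=x$, and the identification of the principal terms via the $q_\alpha \leftrightarrow \Delta_\xi^\alpha$ correspondence---is correct and matches the paper exactly. The gap is in your remainder estimate. When you write $R^a_{x,N}=\sum_{|\beta|=N} q_\beta(y^{-1}x)\,b_\beta$ and ``absorb'' the $q_\beta$ factor, what you actually obtain (via the identity $k_{\sigma,x}(z)=k_{a,x,xz^{-1}}(z)$) is $\Delta_\xi^\beta\,\sigma_{\mathrm{Op}(b_\beta)}$, i.e.\ a difference operator applied to the \emph{symbol of an amplitude operator}, not to the amplitude $b_\beta$ itself. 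Concluding that $\Delta_\xi^\beta$ then ``contributes $\langle\xi\rangle^{-\rho N}$'' requires $\sigma_{\mathrm{Op}(b_\beta)}\in\mathcal{S}^{m+\delta N}_{\rho,\delta}$, and that is precisely (the order-preserving part of) the proposition you are trying to prove, so the argument is circular as stated. This is a genuine difference from the Euclidean case: there the factor $(y-x)^\beta$ becomes $\partial_\eta^\beta$ acting on the amplitude $r_\beta(x,y,\xi+\eta)$ inside an oscillatory integral, producing a new \emph{amplitude} of order $m-(\rho-\delta)N$; on a compact Lie group there is no $\partial_\eta$ on the discrete dual $\widehat{G}$, and the absorbed factor lands on the full resulting symbol instead.

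The paper breaks this circularity by switching to a kernel-based argument. It identifies the symbol remainder with the group Fourier transform of the Taylor remainder of the right-convolution kernel $z\mapsto k_{A,x,xz^{-1}}(i,r,z)$, and then estimates its $L^1$-norm (after applying $(\mathrm{Id}+\mathcal{L})^{M'/2}$) using the pointwise Calder\'on--Zygmund type kernel bound $|k(z)|\lesssim |z|^{-(s'+d)/\rho}$ from Proposition~6.7 of \cite{VerIntri}. A careful bookkeeping of the exponents then shows the resulting $z$-integral is finite for $N$ large, yielding the $\mathcal{S}^{m-(\rho-\delta)N}_{\rho,\delta}$ membership. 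Your approach could be repaired either by invoking such kernel estimates for the a~priori bound on $\sigma_{\mathrm{Op}(b_\beta)}$, or by an iteration scheme combined with a crude a~priori symbol estimate, but neither is mentioned in the proposal.
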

\begin{proof}
    Indeed, by Proposition \ref{Propamplitude}, $A$ is a continuous linear operator acting on $C^\infty(G,E_0)$ and therefore admits symbol $\sigma_A$, the matrix symbol of $A$, given by:
    \begin{equation*}
        \sigma_A(i,r,x,\xi)=\xi(x)^*e_r^*[A(\xi\otimes e_i)(x)].
    \end{equation*}
    Therefore
    \allowdisplaybreaks\begin{align*}
\sigma_A(i,r,x,\xi)_{mn}&=\sum_{l=1}^{d_\xi}\xi(x^{-1})_{ml}(A(\xi_{ln}\otimes e_i))_r(x)\notag\\
        &= \sum_{l=1}^{d_\xi}\xi(x^{-1})_{ml}\int_G\sum_{[\eta]\in\widehat{G}}d_\eta \Tr[\eta(x)a(i,r,x,y,\eta)\xi(y)_{ln}\eta(y)^*]dy\notag\\
        &=\int_G\sum_{l=1}^{d_\xi}\xi(x^{-1})_{ml}\xi(y)_{ln}\sum_{[\eta]\in\widehat{G}}d_\eta \Tr[\eta(y^{-1})\eta(x)a(i,r,x,y,\eta)]dy\notag\\
        &=\int_G\xi(x^{-1}y)_{mn}\sum_{[\eta]\in\widehat{G}}d_\eta \Tr[\eta(y^{-1}x)a(i,r,x,y,\eta)]dy.\notag\\
        \end{align*}
        Performing the change of variables $z=y^{-1}x$ (justified by the invariance of the Haar measure), we obtain that
        \begin{align}   
        \sigma_A(i,r,x,\xi)_{mn}
        &=\int_G\xi(z^{-1})_{mn}\sum_{[\eta]\in\widehat{G}}d_\eta\sum_{j,k=1}^{d_\eta}\eta(z)_{jk}\,a(i,r,x,xz^{-1},\eta)_{kj}dz\notag\\
        &= \sum_{0\leq|\alpha|<N}\frac{1}{\alpha!}\partial_y^{\alpha}|_{y=x}\int_G\xi(z^{-1})_{mn}q_{\alpha}(z)\sum_{[\eta]\in\widehat{G}}d_\eta\sum_{j,k=1}^{d_\eta}\eta(z)_{jk}a(i,r,x,y,\eta)_{kj}dz\notag\\
        &+
        %\sum_{i=1}^n
        \int_G\xi(z^{-1})_{mn}\sum_{[\eta]\in\widehat{G}}d_\eta\sum_{j,k=1}^{d_\eta}\eta(z)_{jk}R_N(i,r,z,\eta)_{kj}dz\notag\\
        &=\sum_{0\leq|\alpha|<N}\frac{1}{\alpha!}\int_G\xi(z)_{mn}^*q_{\alpha}(z)\sum_{[\eta]\in\widehat{G}}d_\eta\Tr[\eta(z)\partial_y^{\alpha}|_{y=x}a(i,r,x,y,\eta)]dz\notag\\
        &+\int_G\xi(z^{-1})_{mn}\sum_{[\eta]\in\widehat{G}}d_\eta\sum_{j,k=1}^{d_\eta}\eta(z)_{jk}R_N(i,r,z,\eta)_{kj}dz\notag\\
        &=\sum_{0\leq|\alpha|<N}\frac{1}{\alpha!}\partial_y^\alpha\Delta_\xi^\alpha a(i,r,x,y,\xi)_{mn}|_{y=x}\notag\\
        &+\sum_{[\eta]\in\widehat{G}}d_\eta
    %\sum_{i=1}^n
    \sum_{j,k=1}^n\int_G\xi(z^{-1})_{mn}\eta(z)_{jk}R_N(i,r,z,\eta)_{kj}dz,\label{remainder}
   \end{align}
   where $R_N(i,r,z,\eta)_{kj}$ is the remainder of the Taylor expansion in $z$ of order $N\in\N$ of $a(i,r,x,z,\eta)_{kj}$ centered at $x$. Therefore, to prove the claim we only need to analyze these last series of integrals. For $(x,y)\in G\times G$, $1\leq i, r\leq n$, define $k_{A,x,y}(i,r,\cdot)$ to be the distribution defined by
   \begin{equation*}
       a(i,r,x,y,\xi)=\widehat{k_{A,x,y}}(i,r,\xi),\,[\xi]\in\widehat{G}.
   \end{equation*}
   Similarly, denote by $k_{\sigma,x}(i,r,\cdot)$ to be the right convolution kernels of $A$, that is, the distributions which satisfy
   \begin{equation*}
       \sigma_A(i,r,x,\xi)=\widehat{k_{\sigma,x}}(i,r,\xi),\,[\xi]\in\widehat{G}.
   \end{equation*}
Since
\begin{align*}
    Af(x)_r&=\int_G \sum_{i=1}^{n}f_i(y)k_{A,x,y}(i,r,y^{-1}x)dy\\
    &=\int_G\sum_{i=1}^{n}f(xz^{-1})k_{A,x,xz^{-1}}(z)dz,
\end{align*}
and
\begin{equation*}
    Af(x)_r=\int_G \sum_{i=1}^{n}f_i(y)k_{\sigma,x}(i,r,y^{-1}x)= \sum_{i=1}^{n}f_i(xz^{-1})k_{\sigma,x}(i,r,z),
\end{equation*}
we have that
\begin{equation*}
    k_{A,x,y}(i,r,y^{-1}x)=k_{\sigma,x}(i,r,y^{-1}x),\,k_{A,x,xz^{-1}}(i,r,z)=k_{\sigma,x}(i,r,z),
\end{equation*}
for all $1\leq i\leq r,\leq n$, $x,y,z\in G$, in the sense of distributions on $G$. Now, fix $1\leq i,r\leq n$, and notice that the last term in \eqref{remainder} is the inverse Fourier transform of
\begin{equation*}
    k_{\sigma,x}(i,r,z)-\sum_{|\alpha|<N}q_{\alpha}(z)\partial_z^{\alpha}k_{A,x,xz_1^{-1}}(i,r,z)|_{z_{1}=e}.
\end{equation*}
Our goal now is to prove that the Fourier transform of the expression above is in $\mathcal{S}_{\rho,\delta}^{m-(\rho-\delta)N}(G\otimes \text{End}(E_0))$ for every $N$ sufficiently big, which we do as follows. For any multi-indices $\gamma,\beta$, let $M'\in2\N_0$ such that
\begin{equation*}
    M'>\rho |\gamma|-\delta|\beta|-m+(\rho-\delta)N,
\end{equation*}
and
\begin{equation}\label{ineqM'}
    \frac{M'}{2}>-\rho|\gamma|+\delta|\beta|+m+2(\rho-\delta)N.
\end{equation}
Then 
\begin{align}
    &\left\|\langle\xi\rangle^{\rho|\gamma|-\delta|\beta|-m+(\rho-\delta)N} \Delta^{\gamma}\partial_x^{\beta}\left[\sum_{[\eta]\in\widehat{G}}d_\eta\sum_{i=1}^n\sum_{j,k=1}^n\int_G\xi(z^{-1})_{mn}\eta(z)_{jk}R_N(i,r,z,\eta)_{kj}dz\right]\right\|_{op}\notag\\
    %&\leq\|\langle\xi\rangle^{\rho|\gamma|-\delta|\beta|-m+(\rho-\delta)N} \Delta^{\gamma}\partial_x^{\beta}[k_{\sigma,x}(i,r,z)-\sum_{|\alpha|<N}q_{\alpha}(z)\partial_z^{\alpha}k_{A,x,xz_1^{-1}}(i,r,z)|_{z_{1}=e}]\|_{op}\notag\\
     &\leq\left\|\langle\xi\rangle^{M'} \Delta^{\gamma}\partial_x^{\beta}\left[\sum_{[\eta]\in\widehat{G}}d_\eta\sum_{i=1}^n\sum_{j,k=1}^n\int_G\xi(z^{-1})_{mn}\eta(z)_{jk}R_N(i,r,z,\eta)_{kj}dz\right]\right\|_{op}\notag\\
    &\leq \|(\text{Id}+\mathcal{L})^{\frac{M'}{2}}[q_\gamma(z)\partial_x^\beta[k_{\sigma,x}(i,r,z)-\sum_{|\alpha|<N}q_{\alpha}(z)\partial_z^{\alpha}k_{A,x,xz_1^{-1}}(i,r,z)|_{z_{1}=e}]\|_{L^1(G)_z}\notag\\
    &= \|(\text{Id}+\mathcal{L})^{\frac{M'}{2}}[q_\gamma(z)\partial_x^\beta R_{x,N}^{k_{A,x,x}}(i,r,z)]\|_{L^1(G)_z},\label{Rem}
\end{align}
where
\begin{equation*}
    k_{\sigma,x}(i,r,z)=k_{A,x,xz^{-1}}(i,r,z)=\sum_{|\alpha|<N}q_{\alpha}(z)\partial_{z_1}^{\alpha}k_{A,x,xz_1^{-1}}(i,r,z)|_{z_{1}=e}+R_{x,N}^{k_{A,x}}(i,r,z)
\end{equation*}
for all $x,y,z\in G$ by the Taylor expansion theorem. Moreover, by \cite{VerIntri}, Lemma 7.4, the remainder satisfies the estimate
\begin{equation*}
    |R_{x,N}(i,r,z)|\leq C|z|^N\max_{|\alpha|\leq N}\|\partial^\alpha_x k_{A,x,x}(i,r,z)\|_{L^\infty(G)_x}.
\end{equation*} Now applying the definition of $\mathcal{L}$ and the Leibniz's rule on \eqref{Rem}, gives us
\begin{align*}
    \|(\text{Id}+\mathcal{L})^{\frac{M'}{2}}[q_\gamma(z)\partial_x^\beta &R_{x,N}^{k_{A,x,x}}(i,r,z)]\|_{L^1(G)_z}\\
    &\lesssim \sum_{1\leq i_1\leq\dots\leq i_d\leq d,\,|\lambda|\leq M'}\|X_{i_{1},z}^{\lambda_1}\dots X_{i_{d},z}^{\lambda_d} [R_{x,N}^{q_\gamma(z)\partial_x^{\beta}k_{A,x,x}}(i,r,z)\|_{L^1(G)_z}.
\end{align*}
By estimates similar to the ones above, we have
\begin{equation*}
    |X_{i_{1},z}^{\lambda_1}\dots X_{i_{d},z}^{\lambda_d} [R_{x,N}^{q_\gamma(z)\partial_x^{\beta}k_{A,x,x}}(i,r,z)]|\lesssim |z|^{N-|\lambda|}\max_{|\alpha|\leq N-|\lambda|}\|\partial_z^{\alpha+\lambda}(q_\gamma(z)\partial_x^{\beta}k_{A,x,x}(i,r,z))\|_{L^\infty(G)_x},
\end{equation*}
and since $\{\partial_z^{\alpha+\lambda}(q_\gamma(\cdot)\partial_x^{\beta}k_{A,x,x}(i,r,\cdot))\}_{i,r=1}^n$ are the right-convolution kernels of a pseudo-differential operator of order 
\begin{equation*}
    s'=m+\delta|\beta|+\delta|\alpha|+\delta|\lambda|-\rho|\gamma|,
\end{equation*}
by Proposition 6.7 of \cite{VerIntri} we have that
\begin{align*}
    \|\partial_z^{\alpha+\lambda}(q_\gamma(z)\partial_x^{\beta}k_{A,x,x}(i,r,z))\|_{L^\infty(G)_z}\lesssim |z|^{-\frac{s'+d}{\rho}},
\end{align*}
for $1\leq i,r\leq n$. Putting these previous estimates together, one obtains
\begin{equation}\label{integral}
    \|(\text{Id}+\mathcal{L})^{\frac{M'}{2}}[q_\gamma(z)\partial_x^\beta R_{x,N}^{k_{A,x,x}}(i,r,z)]\|_{L^1(G)_z}\lesssim \int_G |z|^{N-|\lambda|}|z|^{-\frac{s'+d}{\rho}},
\end{equation}
and the integral on the right-hand-side is finite if 
\begin{align*}
    \rho(|\lambda|-N)+s'+d< \rho d.
\end{align*}
But indeed, for $N$ big enough, we have $ M'\lesssim (\rho-\delta)N$. By choosing $M'$ so that $(\rho-\delta)N/2\lesssim M'$ also, we have that $(\rho-\delta)N\asymp M'$. Hence, using that $|\alpha|+|\lambda|\leq N$ and \eqref{ineqM'} yields
\begin{align*}
    \rho(|\lambda|-N)+s'+d&=\rho|\lambda|-\rho d+m+\delta|\beta|+\delta|\alpha|+\delta|\lambda|-\rho|\gamma|+d\\
    &< \rho|\lambda|-\rho N+m+\delta|\beta|+\rho|\alpha|+\rho|\lambda|-\rho|\gamma|+n\\
    &=\rho|\lambda|-\rho(N-|\alpha|-|\lambda|)+m+\delta|\beta|-\rho|\gamma|+d\\
    &\leq \rho|\lambda|+m+\delta|\beta|-\rho|\gamma|+d\\
    &\leq \rho M'+\frac{M'}{2}-2N(\rho-\delta)+d\leq \frac{3M'}{2}-2N(\rho-\delta)+d\\
    &\asymp \frac{3(\rho-\delta)N}{2}-2N(\rho-\delta)+d=-\frac{(\rho-\delta)N}{2}+d.
\end{align*}
By choosing $N\geq N_0\geq 2d(\rho-1)/(\rho-\delta)$, we get that $|\lambda|-N+s'+d<\rho d$, which by \eqref{integral} and the previous inequalities proves that the remainder term in the asymptotic expansion is in $\mathcal{S}_{\rho,\delta}^{m-(\rho-\delta)N}(G\otimes \text{End}(E_0))$, for all such $N$.
\end{proof}

\subsection{Even and odd functions on compact Lie groups}

Recall that $f\in C^{\infty}(G)$ is called central if $f(xy)=f(yx)$, for every $x,y\in G$. Following \cite{RuzSharp}, we will say $f\in C^\infty(G)$ is {\it even} if it is invariant under inversions, that is: $f(x^{-1})=f(x)$ for every $x\in G$. Similarly, we will say that $f\in C^\infty(G)$ is {\it odd} if $f(x^{-1})=-f(x)$, for every $x\in G$. The following results are then easy to verify:

\begin{lemma}\label{evenodd}
    Let $f\in C^\infty(G)$ be central and even. Then for any left-invariant vector field $X$, the function $Xf$ is odd. Moreover, if $g\in C^{\infty}$ is odd, then $fg$ is also odd.
\end{lemma}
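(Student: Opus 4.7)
The plan is to verify each assertion directly from the definitions, using centrality and evenness to move the one-parameter subgroup $\exp(tX)$ past $x^{-1}$ and then flip the product with an inversion.

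For the first part, fix $x \in G$ and compute, from the definition of the left-invariant derivative,
\begin{equation*}
(Xf)(x^{-1}) = \frac{d}{dt}\bigg|_{t=0} f(x^{-1}\exp(tX)).
\end{equation*}
Centrality of $f$ lets me rewrite $f(x^{-1}\exp(tX)) = f(\exp(tX)\,x^{-1})$. Then evenness gives
\begin{equation*}
f(\exp(tX)\,x^{-1}) = f\bigl((\exp(tX)\,x^{-1})^{-1}\bigr) = f(x\exp(-tX)).
\end{equation*}
Differentiating at $t=0$ and applying the chain rule produces an extra minus sign, so $(Xf)(x^{-1}) = -(Xf)(x)$, which is exactly the oddness of $Xf$.

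For the second statement, the verification is a one-liner: if $g$ is odd and $f$ is even, then for every $x \in G$,
\begin{equation*}
(fg)(x^{-1}) = f(x^{-1})\,g(x^{-1}) = f(x)\cdot\bigl(-g(x)\bigr) = -(fg)(x),
\end{equation*}
so $fg$ is odd. Note that centrality is not needed here; only evenness of $f$ is used.

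There is no real obstacle in either part; the only point requiring mild attention is tracking the sign when inverting $\exp(tX)\,x^{-1}$, where the inversion swaps the order of factors and turns $\exp(tX)$ into $\exp(-tX)$, which is precisely what produces the minus sign that makes $Xf$ odd.
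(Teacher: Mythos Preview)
Your proof is correct and essentially identical to the paper's own argument: both compute $(Xf)(x^{-1})$ by rewriting $f(x^{-1}\exp(tX))$ as $f(x\exp(-tX))$ via centrality and evenness (you apply centrality first and then evenness, whereas the paper applies them in the opposite order, but the outcome is the same), and both treat the second claim as immediate from the definitions.
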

\begin{comment}
    \begin{proof}
    Indeed, we have that
    \begin{align*}
        Xf(x^{-1})&=\lim_{t\to0}\frac{f(x^{-1}\exp(tX))-f(x^{-1})}{t}\\
        &=\lim_{t\to0}\frac{f(\exp(-tX)x)-f(x)}{t}\\
        &=\lim_{t\to0}\frac{f(x\exp(-tX))-f(x)}{t}\\
        &=-Xf(x),
    \end{align*}
    for any $x\in G$, which proves the first claim. The second claim is immediate.
\end{proof}
\end{comment}

\begin{lemma}\label{int0}
    Let $f\in C^\infty(G)$ be odd. Then $\int_G f(x)dx=0$.
\end{lemma}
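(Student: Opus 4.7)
The plan is to exploit the inversion-invariance of the Haar measure on a compact Lie group, which is the fundamental fact that turns the pointwise antisymmetry $f(x^{-1}) = -f(x)$ into a vanishing integral.

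First I would recall that since $G$ is compact, its (normalized) Haar measure $dx$ is both left- and right-invariant, and moreover invariant under the inversion map $x \mapsto x^{-1}$: for any $g \in L^1(G)$ one has $\int_G g(x)\,dx = \int_G g(x^{-1})\,dx$. This is standard and follows, for instance, from the fact that on a compact group the pushforward of Haar measure under inversion is again a bi-invariant probability measure, hence coincides with Haar measure by uniqueness.

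Applying this to $f$ and using the oddness hypothesis $f(x^{-1}) = -f(x)$, I would write
\begin{equation*}
    \int_G f(x)\,dx = \int_G f(x^{-1})\,dx = \int_G -f(x)\,dx = -\int_G f(x)\,dx,
\end{equation*}
and since the integral equals its own negative, it must vanish. This completes the proof.

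There is no real obstacle here; the only point worth flagging in the write-up is the inversion-invariance of Haar measure on compact groups, which the author may either cite as standard or justify in a single line. The argument is purely a change of variables combined with the defining symmetry of odd functions, and it requires none of the pseudo-differential machinery developed earlier in the section.
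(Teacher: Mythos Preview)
Your proof is correct and is exactly the standard argument the paper has in mind; note that the paper does not actually spell out a proof of this lemma, merely stating that it is ``easy to verify,'' so your change-of-variables via inversion-invariance of Haar measure is the intended one-line justification.
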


\section{Main Results}\label{MainRes}

\subsection{Sharp G\aa rding inequality on compact Lie groups}
Here we present the main result of this paper. Its proof will be then carried out through several lemmas.

\begin{theorem}\label{maintheo}
    Let $G$ be a compact Lie group, $\dim(G)=d$, $E_0$ an finite a $n$-dimensional $\mathbb{C}$-vector space. Let $A=\text{Op}(\sigma_A)\in\Psi^m_{\rho,\delta}(G\otimes \text{End}(E_0))$, $0\leq \delta<\rho\leq 1$, be such that its matrix-valued symbol $\sigma_A(x,\xi)$ is positive semi-definite for every $(x,[\xi])\in G\times \widehat{G}$, in the sense that $B^*\sigma_A(x,\xi)B\in \mathbb{C}^{d_\xi\times d_\xi}$ is positive semi-definite for all $B\in(\mathbb{C}^{d_\xi\times d_\xi})^{n\times 1}$. Equivalently, this means that for any $B_1,\dots,B_n\in \mathbb{C}^{d_\xi\times 1}$, the inequality
    \begin{align*}
        \sum_{i,r=1}^n \overline{B_i}^T\sigma_A(i,r,x,\xi)B_r\geq 0,
    \end{align*}
    holds for all $(x,[\xi])\in G\times\widehat{G}$.
    Then there exists $C>0$ such that, for every $u\in C^\infty(G,E_0)$ we have
    \begin{equation}\label{ineqtheo}
        \text{Re}(Au,u)_{L^2(G,E_0)}\geq -C\|u\|^2_{H^{(m-(\rho-\delta))/2}(G,E_0)}.
    \end{equation}
\end{theorem}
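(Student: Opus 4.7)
The plan is to adapt the Friedrichs symmetrization strategy used in the scalar sharp G\aa rding inequality on compact Lie groups (as in \cite{RuzSharp}) to the vector-valued, matrix-symbol setting. The overall scheme is: construct an auxiliary amplitude $\widetilde{a}$ built from $\sigma_A$ such that (i) $\text{Op}(\widetilde{a})$ is manifestly non-negative as a quadratic form on $L^2(G,E_0)$, and (ii) the difference $A-\text{Op}(\widetilde{a})$ is a pseudo-differential operator of order $m-(\rho-\delta)$. Once this is done, applying the Sobolev boundedness result Theorem \ref{pseudobounded} with $s=(m-(\rho-\delta))/2$ to the lower-order remainder $R:=A-\text{Op}(\widetilde{a})$ yields
\begin{equation*}
\text{Re}(Au,u)_{L^2(G,E_0)} = \text{Re}(\text{Op}(\widetilde{a})u,u)_{L^2(G,E_0)} + \text{Re}(Ru,u)_{L^2(G,E_0)} \geq -C\|u\|^2_{H^{(m-(\rho-\delta))/2}(G,E_0)},
\end{equation*}
which is the desired inequality.

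First I would define the symmetrized amplitude componentwise on the matrix-indices $(i,r)$. The idea is to fix, for each class $[\xi]\in\widehat{G}$, a cutoff $\phi_\xi\in C^\infty(G)$ which is central, even and supported in a shrinking neighbourhood of $e_G$ of scale comparable to $\langle\xi\rangle^{-\rho}$, normalized so that $\int_G |\phi_\xi|^2 = 1$, and to set (schematically)
\begin{equation*}
\widetilde{a}(i,r,x,y,\xi) \;=\; \int_G \phi_\xi(x z^{-1})\,\sigma_A(i,r,z,\xi)\,\phi_\xi(yz^{-1})\,dz .
\end{equation*}
Because $\phi_\xi$ appears on both sides of $\sigma_A$, the quadratic form $(\text{Op}(\widetilde{a})u,u)_{L^2(G,E_0)}$ reorganizes, after interchanging summations and using the Plancherel identity, into an integral over $z\in G$ of $\sum_{i,r}\overline{V_i(z,\xi)}^{T}\sigma_A(i,r,z,\xi)V_r(z,\xi)$ weighted by $d_\xi$ and summed over $[\xi]\in\widehat{G}$, where the vectors $V_i(z,\xi)$ are built from $u$, $\phi_\xi$ and $\xi$. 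The hypothesis on $\sigma_A$ in the theorem is precisely that each such integrand is non-negative, so $(\text{Op}(\widetilde{a})u,u)\geq 0$.

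Next I would identify $\text{Op}(\widetilde{a})$ as a pseudo-differential operator via Proposition \ref{propasymptotic} and compare its symbol with $\sigma_A$. The asymptotic expansion reads
\begin{equation*}
\sigma_{\text{Op}(\widetilde{a})}(i,r,x,\xi) \sim \sum_{\alpha\geq 0} \tfrac{1}{\alpha!}\, \partial_y^\alpha \Delta_\xi^\alpha \widetilde{a}(i,r,x,y,\xi)\big|_{y=x},
\end{equation*}
and I would show that the $\alpha=0$ term equals $\sigma_A(i,r,x,\xi)$ (by $\int|\phi_\xi|^2=1$ together with a Taylor expansion of $\sigma_A(z,\xi)$ about $z=x$), that the $|\alpha|=1$ contribution vanishes by the parity arguments of Lemma \ref{evenodd} and Lemma \ref{int0} (since $\phi_\xi$ is central and even, the relevant integrands become odd), and that all higher-order terms are controlled by the scale $\langle\xi\rangle^{-\rho}$ of the support of $\phi_\xi$ against the $\langle\xi\rangle^{\delta}$ growth coming from $\partial_y^\alpha$. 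Each additional order of $\alpha$ therefore costs $\langle\xi\rangle^{-(\rho-\delta)}$, so the remainder after the first vanishing term lies in $\mathcal{S}^{m-(\rho-\delta)}_{\rho,\delta}(G\otimes\text{End}(E_0))$.

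The main obstacle is making the Friedrichs construction compatible with both the matrix structure on $E_0$ and the matrix-valued nature of $\sigma_A(x,\xi)\in\mathbb{C}^{d_\xi\times d_\xi}$ simultaneously, while keeping the cancellation of the first-order term in the asymptotic expansion intact. The parity hypothesis on $\phi_\xi$ is delicate because the left-invariant derivatives $\partial^\alpha$ do not commute and must be paired correctly with the difference operators $\Delta_\xi^\alpha$; Lemma \ref{evenodd} guarantees that the linear term is the integral of an odd function times a central even function, so Lemma \ref{int0} kills it, but this must be verified entry by entry in $(i,r)$ and $(m,n)$ uniformly in $[\xi]\in\widehat{G}$. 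Once this cancellation and the quantitative bounds on higher-order terms are established, combining the non-negativity of $\text{Op}(\widetilde{a})$ with the $H^{(m-(\rho-\delta))/2}$-boundedness of $R$ completes the proof of \eqref{ineqtheo}.
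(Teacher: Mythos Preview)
Your overall strategy is exactly the one the paper uses: Friedrichs symmetrization via the amplitude $p(i,r,x,y,\xi)=\int_G w_\xi(xz^{-1})w_\xi(yz^{-1})\sigma_A(i,r,z,\xi)\,dz$, positivity of $\text{Op}(p)$ from the hypothesis on $\sigma_A$, and then showing $A-\text{Op}(p)$ has order $m-(\rho-\delta)$ via Proposition~\ref{propasymptotic} together with the parity cancellation (Lemmas~\ref{evenodd}, \ref{int0}) for the first-order term.

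There is, however, a genuine error in the mollifier scale. You take $\phi_\xi$ supported at scale $\langle\xi\rangle^{-\rho}$ and claim that $\partial_y^\alpha$ costs $\langle\xi\rangle^{\delta|\alpha|}$, so that $\Delta_\xi^\alpha\partial_y^\alpha$ gains $\langle\xi\rangle^{-(\rho-\delta)|\alpha|}$. But the $y$-dependence of $\widetilde a$ is entirely through $\phi_\xi(yz^{-1})$, so each $\partial_y$ costs the inverse of the mollifier scale, i.e.\ $\langle\xi\rangle^{\rho}$, not $\langle\xi\rangle^{\delta}$. With scale $\langle\xi\rangle^{-\rho}$ the terms $\Delta_\xi^\alpha\partial_y^\alpha p|_{y=x}$ in the asymptotic expansion carry order $m-\rho|\alpha|+\rho|\alpha|=m$ for every $\alpha$: there is no decay and the remainder is not of lower order.

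The correct scale, used in the paper, is $\langle\xi\rangle^{-(\rho+\delta)/2}$, and it is forced by balancing two competing estimates. If the support scale is $\langle\xi\rangle^{-s}$, then (a) for the diagonal term $p(i,r,x,x,\xi)-\sigma_A(i,r,x,\xi)$ a second-order Taylor expansion of $\sigma_A(\cdot,\xi)$ in $z$ (the first-order term vanishes by parity) gives order $m+2\delta-2s$; and (b) for the terms $|\alpha|\geq 2$ in the expansion of $\sigma_{\text{Op}(p)}-p|_{y=x}$ one gets order $m-(\rho-s)|\alpha|\leq m-2(\rho-s)$. Requiring both to be $\leq m-(\rho-\delta)$ forces $s=(\rho+\delta)/2$. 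With this choice $p\in\mathcal{A}^m_{\rho,(\rho+\delta)/2}$, and the paper carries out the two estimates separately as Lemmas~\ref{lemma1} and~\ref{lemma2}; note in particular that the resulting remainder lies in $\mathcal{S}^{m-(\rho-\delta)}_{\rho,(\rho+\delta)/2}$, not $\mathcal{S}^{m-(\rho-\delta)}_{\rho,\delta}$, but this is still enough for the $H^{(m-(\rho-\delta))/2}$ boundedness since $(\rho+\delta)/2<\rho$.
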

\begin{remark}\label{remark}
    In the previous statement and throughout the rest of this paper, for a column-vector of matrices $B\in(\mathbb{C}^{d_\xi\times d_\xi})^{n\times 1}$ we denote by $B^*\in(\mathbb{C}^{d_\xi\times d_\xi})^{1\times n}$ the row-vector of matrices given by
    \begin{equation*}
        B^*=\left(B_{1}^*,\dots,B_{n}^*\right).
    \end{equation*}
\end{remark}
    First, notice that if $Q:H^{\frac{m-(\rho-\delta)}{2}}(G,E_0)\to H^{-\frac{m-(\rho-\delta)}{2}}(G,E_0)$ is a bounded linear operator, we have
    \begin{align*}
        \text{Re}(Qu,u)_{L^2(G,E_0)}&\geq -|(Qu,u)_{L^2(G,E_0)}|\\
        &\geq -\|Qu\|_{H^{-\frac{m-(\rho-\delta)}{2}}(G,E_0)}\|u\|_{H^{\frac{m-(\rho-\delta)}{2}}(G,E_0)}\\
        &\geq -\|Q\|_{op}\|u\|_{H^{\frac{m-(\rho-\delta)}{2}}(G,E_0)}^2.
    \end{align*}
Hence, Theorem \ref{maintheo} will follow once we show that $A$ can be written as $A=P+Q$, where $P$ is positive and $Q$ is as above.\\
First, following the ideas in \cite{RuzSharp} and \cite{SharpSub}, we construct an auxiliary function $w_\xi:G\to \mathbb{C}$, for each $\xi\in\text{Rep}(G)$.\\
We can assume $G$ is a closed subgroup of $\text{GL}(N,\mathbb{R})$ for some $N\in\N$. Then its Lie algebra $\mathfrak{g}\subset\R^{N\times N}$ is a $d$-dimensional vector subspace such that $[A,B]\doteq AB-BA\in\mathfrak{g}$, for every $A,B\in\mathfrak{g}$. Let $U\subset G$ and $V\subset \mathfrak{g}$ be neighbourhoods of the identity $\text{Id}=e\in G$ and $0\in\mathfrak{g}$, respectively, so that the matrix exponential mapping is a diffeomorphism $\exp:V\to U$. Without loss of generality, we may assume that $V$ is the open ball $V= B(0,r)=\{z\in\R^d|\,|z|<r\}$, of radius $r>0$. Let $\phi:[0,\infty)\to [0,\infty)$ be a smooth function such that the mapping
\begin{align*}
    \mathfrak{g}&\to\R\\
    z&\mapsto \phi(|z|)
\end{align*}
is supported in $V$ and such that $\phi(s)=1$ for all sufficiently small $s>0$. For every $\xi\in \text{Rep}(G)$ define
    \begin{align*}\label{w_xi}
       w_\xi:G&\to\R\\ 
        x&\mapsto \phi(|\exp^{-1}(x)|\langle\xi\rangle^{\frac{\rho+\delta}{2}})\psi(\exp^{-1}(x))\langle\xi\rangle^{\frac{d(\rho+\delta)}{4}},
    \end{align*}
    where $\psi(y) \doteq C_0|\det D\exp(y)|^{-\frac{1}{2}}f(y)^{-\frac{1}{2}}$, for every $y\in\mathfrak{g}\cong\R^d$, $D\exp$ is the Jacobi matrix of the mapping $\exp$, $f$ is the density with respect to the Lebesgue measure of the Haar measure on $G$ pulled back to $\mathfrak{g}\cong\R^d$ by the exponential mapping, and with $C_0=(\int_{\R^d}\phi(|z|)dz)^{-\frac{1}{2}}$.
    
    The following lemma states the main properties of $w_\xi$ that will be used in this paper, and its proof can be found in \cite{SharpSub}.

\begin{lemma}\label{lemmaw_xi}
    The functions $w_\xi$ defined above are smooth, for every $\xi\in \text{Rep}(G)$. Moreover, they satisfy the following properties:
    \begin{enumerate}
        \item $w_\xi(e)=C_0\langle\xi\rangle^{\frac{d(\rho+\delta)}{4}}$;
        \item $w_\xi$ is central and inversion invariant;
        \item $\text{dist}(x,e)\sim |\exp^{-1}(x)|\lesssim \pseudoxi^{-\frac{\rho+\delta}{2}}$ on the support of $w_\xi$;
        \item $\|w_\xi\|_{L^2(G)}=1$;
        \item $(x,[\xi])\mapsto w_\xi(x)\text{Id}_{d_\xi}\in \mathcal{S}^{d(\rho+\delta)/4}_{\rho,(\rho+\delta)/2}(G)$,
    \end{enumerate}
 for every $\xi\in\text{Rep}(G)$.
\end{lemma}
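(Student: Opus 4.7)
My plan is to verify the five properties in turn; (1)--(4) follow quickly from the construction, while (5) is the substantive technical claim and the main obstacle.

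For (1), evaluating at $x=e$ gives $\exp^{-1}(e)=0$, $\phi(0)=1$, $D\exp(0)=\mathrm{Id}$, and $f(0)=1$, so $w_\xi(e)=C_0\langle\xi\rangle^{d(\rho+\delta)/4}$. For (2), I would fix an $\mathrm{Ad}$-invariant inner product on $\mathfrak{g}$ (available since $G$ is compact) and use $\exp^{-1}(gxg^{-1})=\mathrm{Ad}(g)\exp^{-1}(x)$, together with $\mathrm{Ad}$-invariance of $|\det D\exp|$ and of the pulled-back Haar density $f$, to obtain centrality; inversion invariance then follows from $\exp^{-1}(x^{-1})=-\exp^{-1}(x)$ and the evenness in $y\in\mathfrak{g}$ of $|y|$, $|\det D\exp(y)|$, and $f(y)$. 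Property (3) is immediate: on $\mathrm{supp}\,w_\xi$ the argument of $\phi$ lies in $[0,r]$, forcing $|\exp^{-1}(x)|\lesssim\langle\xi\rangle^{-(\rho+\delta)/2}$, and $|\exp^{-1}(x)|\sim\mathrm{dist}(x,e)$ holds on the neighbourhood $U$ where $\exp$ is a diffeomorphism. Property (4) is a change of variables $x=\exp(y)$: the Haar measure becomes $f(y)\,dy$, and the factors in $\psi^2$ together with the scaling $z=y\langle\xi\rangle^{(\rho+\delta)/2}$ (which absorbs the prefactor $\langle\xi\rangle^{d(\rho+\delta)/2}$) reduce the integral to $C_0^2\int_{\R^d}\phi(|z|)\,dz=1$ by the definition of $C_0$.

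The substantive step is (5): one must show
\begin{equation*}
    \|\partial_x^\beta\Delta_\xi^\alpha(w_\xi(x)\mathrm{Id}_{d_\xi})\|_{\mathrm{op}}\lesssim\langle\xi\rangle^{d(\rho+\delta)/4-\rho|\alpha|+\frac{\rho+\delta}{2}|\beta|}.
\end{equation*}
The $\partial_x^\beta$-bound is a chain-rule computation: each derivative either hits the cutoff $\phi(|\exp^{-1}(x)|\langle\xi\rangle^{(\rho+\delta)/2})$, producing a factor of $\langle\xi\rangle^{(\rho+\delta)/2}$, or falls on the smooth $\psi\circ\exp^{-1}$, producing an $O(1)$ factor, giving exactly the exponent $\tfrac{\rho+\delta}{2}|\beta|$ claimed. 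The difference-operator bound is more delicate: since $w_\xi(x)\mathrm{Id}_{d_\xi}$ depends on $[\xi]$ only through the scalar $\langle\xi\rangle$, I would write it as $F(x,\langle\xi\rangle)\,\mathrm{Id}_{d_\xi}$ and apply the standard estimate $\|\Delta_\xi^\alpha\langle\xi\rangle^s\|_{\mathrm{op}}\lesssim\langle\xi\rangle^{s-|\alpha|}$ combined with the Leibniz-type identity of Proposition~\ref{leibniz} and a Taylor expansion of $\phi$ in its argument.

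The main obstacle will be controlling the interaction between $\Delta_\xi^\alpha$ and the $\xi$-dependent cutoff: a naive expansion of $\phi$ in powers of $\langle\xi\rangle^{(\rho+\delta)/2}|\exp^{-1}(x)|$ is not convergent where $\phi$ is nonconstant, so one must use Leibniz on the $\Delta_\xi^\alpha$-side together with the support condition $|\exp^{-1}(x)|\lesssim\langle\xi\rangle^{-(\rho+\delta)/2}$ from (3) to balance the loss of $\rho$ per difference against the gain of $(\rho+\delta)/2$ per $x$-derivative that such an expansion produces. This is exactly the bookkeeping carried out in \cite{SharpSub}, to which I would refer for the detailed verification once the reduction to the scalar-times-identity form has been made.
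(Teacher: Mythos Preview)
The paper does not prove this lemma at all: immediately before the statement it says ``its proof can be found in \cite{SharpSub},'' and no argument is given. Your proposal---verifying (1)--(4) directly from the construction and then deferring the technical symbol estimate (5) to \cite{SharpSub}---is therefore fully consistent with, and in fact more informative than, what the paper itself does.

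One small point on (4): when you compute $\|w_\xi\|_{L^2}^2$ you obtain $\phi(|z|)^2$, not $\phi(|z|)$, in the integrand after rescaling, so the normalization should read $C_0^2\int_{\R^d}\phi(|z|)^2\,dz=1$; also note that the change of variables leaves a residual factor $|\det D\exp(z\langle\xi\rangle^{-(\rho+\delta)/2})|^{-1}$ which is not identically $1$, so the equality $\|w_\xi\|_{L^2}=1$ as stated depends on the precise interplay between the factors $|\det D\exp|^{-1/2}$ and $f^{-1/2}$ in $\psi$. These are bookkeeping issues about the exact form of $\psi$ and $C_0$ in the cited reference rather than a flaw in your strategy.
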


\begin{lemma}\label{lemmaP}
    Let $\sigma_A$ be the matrix valued symbol of $A$ as in Theorem \ref{maintheo}. Define the amplitude
    \begin{equation}\label{P}
        p(i,r,x,y,\xi)\doteq \int_Gw_{\xi}(xz^{-1})w_\xi(yz^{-1})\sigma_A(i,r,z,\xi)dz\in (\mathbb{C}^{d_\xi\times d_\xi})^{n\times n},
    \end{equation}
    for every $1\leq i,r\leq n,\, x,y\in G$, $[\xi]\in\widehat{G}$, where $w_\xi$ is as in Lemma \ref{lemmaw_xi}. Then $p\in\mathcal{A}_{\rho,\frac{\rho+\delta}{2}}^{m}(G\otimes \text{End}(E_0))$ and the linear operator $P:C^\infty(G,E_0)\to C^\infty(G,E_0)$ given by
    \begin{equation*}
(Pu)_r(x)=\int_G\sum_{i=1}^n\sum_{[\xi]\in\widehat{G}}d_\xi\Tr(\xi(y^{-1}x)p(i,r,x,y,\xi)u_i(y))dy,\quad 1\leq r\leq n,
    \end{equation*}
    for every $x\in G$ is a well defined pseudo-differential operator of order $m$, and also a positive linear operator on $L^2(G,E_0)$.
\end{lemma}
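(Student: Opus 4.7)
The plan is to prove the lemma in three stages: that $p$ lies in the claimed amplitude class, that $P=\text{Op}(p)$ is then a pseudo-differential operator of order $m$, and finally that $P\geq 0$ on $L^{2}(G,E_{0})$. For the amplitude estimate, I would expand $\Delta_{\xi}^{\alpha}\partial_{x}^{\beta}\partial_{y}^{\gamma}p(i,r,x,y,\xi)$ and note that the $x$- and $y$-derivatives act only on the factors $w_{\xi}(xz^{-1})$ and $w_{\xi}(yz^{-1})$ respectively, while $\Delta_{\xi}^{\alpha}$ is distributed across all three $\xi$-dependent factors via Proposition~\ref{leibniz}. Pointwise bounds follow from $w_{\xi}\text{Id}_{d_{\xi}}\in\mathcal{S}^{d(\rho+\delta)/4}_{\rho,(\rho+\delta)/2}(G)$ (Lemma~\ref{lemmaw_xi}(5)) and $\sigma_{A}\in\mathcal{S}^{m}_{\rho,\delta}$, giving an integrand of order $\langle\xi\rangle^{d(\rho+\delta)/2+m-\rho|\alpha|+\frac{\rho+\delta}{2}(|\beta|+|\gamma|)}$. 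The $z$-integration is effectively over the intersection of supports of the two $w_{\xi}$-factors, of measure $\lesssim\langle\xi\rangle^{-d(\rho+\delta)/2}$ by Lemma~\ref{lemmaw_xi}(3), which absorbs the extra $\langle\xi\rangle^{d(\rho+\delta)/2}$ and yields $p\in\mathcal{A}^{m}_{\rho,(\rho+\delta)/2}(G\otimes\text{End}(E_{0}))$.

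The pseudo-differential nature of $P$ then follows immediately from Proposition~\ref{propasymptotic}, applied with $\delta$ replaced by $(\rho+\delta)/2$; this is allowed because $\delta<\rho$ forces $(\rho+\delta)/2<\rho$. For positivity, the plan is to unfold $(Pu,u)_{L^{2}(G,E_{0})}$, substitute the integral for $p$, and factor $\xi(y^{-1}x)=\xi(y^{-1}z)\xi(z^{-1}x)$. Using cyclicity of the trace together with the centrality and inversion-invariance of $w_{\xi}$ to separate the $x$- and $y$-integrals, I would aim to rewrite the result in the form
\[
(Pu,u)_{L^{2}(G,E_{0})}=\sum_{[\xi]\in\widehat{G}}d_{\xi}\int_{G}\Tr\bigl[M(z,\xi)^{*}\sigma_{A}(z,\xi)\,M(z,\xi)\bigr]\,dz,
\]
where $M(z,\xi)\in(\mathbb{C}^{d_{\xi}\times d_{\xi}})^{n\times 1}$ is a block column whose $i$-th block is a suitable integral over $G$ of $u_{i}$ against $w_{\xi}$ and a factor involving $\xi$. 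With this representation, the positivity hypothesis on $\sigma_{A}$ (as formulated in Remark~\ref{remark}) forces the integrand to be pointwise non-negative, and the conclusion $P\geq0$ follows.

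The hard part will be this last step: the naive factorization produces a trace $\sum_{i,r}\Tr[F_{r}^{*}\sigma_{A}(i,r,z,\xi)F_{i}]$ with the block indices of $\sigma_{A}$ transposed relative to the standard positivity form $\sum_{i,r}\Tr[M_{i}^{*}\sigma_{A}(i,r,z,\xi)M_{r}]$. I expect to bridge this gap either by inserting $\xi(z)\xi(z)^{*}=\text{Id}$ into the factorization, writing $\xi(y^{-1}x)=\xi(z)^{*}\xi(zy^{-1})\xi(xz^{-1})\xi(z)$, thereby conjugating $\sigma_{A}$ by the unitary $\xi(z)$ (which preserves positivity) and absorbing the residual $\xi(z)$ into the definition of $M$ via a change of variables in $z$; or by appealing to the Hermitian symmetry $\sigma_{A}(r,i,z,\xi)=\sigma_{A}(i,r,z,\xi)^{*}$ that is implicit in the positivity hypothesis, combined with relabeling of the dummy indices $(i,r)$, to bring the expression into the form to which Remark~\ref{remark} applies directly.
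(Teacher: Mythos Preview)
Your proposal follows the same three-step strategy as the paper. For the amplitude estimate, the argument is identical: $\partial_x^\beta$ and $\partial_y^\gamma$ land only on the respective $w_\xi$ factors, $\Delta_\xi^\alpha$ is distributed across the three $\xi$-dependent factors via Proposition~\ref{leibniz}, and the resulting pointwise bound of order $\langle\xi\rangle^{d(\rho+\delta)/2+m-\rho|\alpha|+\frac{\rho+\delta}{2}(|\beta|+|\gamma|)}$ is reduced to the target by the support-measure estimate $|\operatorname{supp}w_\xi|\lesssim\langle\xi\rangle^{-d(\rho+\delta)/2}$ from Lemma~\ref{lemmaw_xi}(3). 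Your appeal to Proposition~\ref{propasymptotic} (with $(\rho+\delta)/2$ in place of $\delta$, which is legitimate since $\delta<\rho$) for the pseudo-differential conclusion is also what the paper relies on, though the paper leaves this step implicit.

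For positivity, the paper does exactly what you outline: it expands $(Pu,u)_{L^2}$, substitutes the definition of $p$, and recasts the result as
\[
\int_G\sum_{[\xi]\in\widehat{G}}d_\xi\sum_{i,r}\Tr\bigl[M(i,z,\xi)^*\sigma_A(i,r,z,\xi)M(r,z,\xi)\bigr]\,dz,
\]
with $M(i,z,\xi)=\int_G w_\xi(yz^{-1})\xi(yz^{-1})^*u_i(y)\,dy$, so that the hypothesis applies columnwise via $B_i=M(i,z,\xi)f_k$ for each basis vector $f_k$. The paper does not pause over the index-alignment issue you flag; it simply asserts the rewriting. Observe that the paper's $M$ already carries the factor $\xi(yz^{-1})^*$ rather than $\xi(y)^*$, which is your first proposed manoeuvre of inserting $\xi(z)\xi(z)^*$ and absorbing the extra $\xi(z)$ into $M$; this is the route to take. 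I would caution you against relying on your second fix in isolation: Hermitian symmetry $\sigma_A(r,i)=\sigma_A(i,r)^*$ together with relabelling $(i,r)\mapsto(r,i)$ converts the ``transposed'' form into a quadratic form against the block-wise adjoint of $\sigma_A$, which is a partial transpose, and the partial transpose of a positive semi-definite block matrix need not be positive semi-definite.
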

\begin{proof} First, we verify that $p\in\mathcal{A}_{\rho,\frac{\rho+\delta}{2}}^m(G\otimes \text{End}(E_0))$. Indeed, note that by the Leibniz's rule, for any multi-indices $\alpha,
\beta,\gamma$, the matrix $\partial_x^{\beta}\partial_y^{\gamma}\Delta_\xi^{\alpha}p(i,r,x,y,\xi)$ is given by a sum of terms of the form
\begin{equation*}
    \int_G(\Delta_\xi^{\eta}\partial_x^{\beta}w_\xi(xz^{-1}))(\Delta_\xi^{\lambda}\partial_y^{\gamma}w_{\xi}(yz^{-1}))(\Delta_\xi^{\mu}\sigma_A(i,r,z,\xi))dz
\end{equation*}
where $|\eta+\lambda+\mu|\geq |\alpha|$. Also, due to properties of $w_\xi$ from Lemma \ref{lemmaw_xi}, we have
\begin{align*}
    \|(\Delta_\xi^{\eta}\partial_x^{\beta}w_\xi(xz^{-1}))(\Delta_\xi^{\lambda}\partial_y^{\gamma}w_{\xi}(yz^{-1}))\|_{op}&\lesssim \pseudoxi^{d\frac{(\rho+\delta)}{2}-\rho(|\eta|+|\lambda|)+\frac{(\rho+\delta)}{2}(|\beta|+|\gamma|)}.
\end{align*}
Therefore, since $\text{supp}(w_\xi)$ is contained in a set of measure proportional to $\pseudoxi^{-d\frac{(\rho+\delta)}{2}}$, we can estimate
\begin{align*}
    &\left\|\int_G(\Delta_\xi^{\eta}\partial_x^{\beta}w_\xi(xz^{-1}))(\Delta_\xi^{\lambda}\partial_y^{\gamma}w_{\xi}(yz^{-1}))(\Delta_\xi^{\mu}\sigma_A(i,r,z,\xi))dz\right\|_{op}\\
    &\qquad\qquad\qquad\qquad\qquad\qquad\qquad\lesssim \pseudoxi^{d\frac{(\rho+\delta)}{2}-\rho(|\eta|+|\lambda|)+\frac{(\rho+\delta)}{2}(|\beta|+|\gamma|)-d\frac{(\rho+\delta)}{2}+m-\rho|\mu|}\\
    &\qquad\qquad\qquad\qquad\qquad\qquad\qquad=\pseudoxi^{m-\rho|\alpha|+\frac{(\rho+\delta)}{2}(|\beta|+|\gamma|)}
\end{align*}
and so $\|\partial_x^{\beta}\partial_y^{\gamma}\Delta_\xi^{\alpha}p(i,r,x,y,\xi)\|_{op}\lesssim\pseudoxi^{m-\rho|\alpha|+\frac{(\rho+\delta)}{2}(|\beta|+|\gamma|)}$ also, which proves the first claim.
Next, notice that
    \begin{align*}
        (Pu,u)_{L^2(G,E_0)}&=\int_G\langle Pu(x),u(x)\rangle_{E_0}dx\\
        &=\int_G \int_G\sum_{r,i=1}^n\sum_{[\xi]\in\widehat{G}}d_\xi\Tr[\xi(y)^*\xi(x)p(i,r,x,y,\xi)u_i(y)]\mathop{dy}\overline{u_r(x)}\mathop{dx}.
    \end{align*}
    Substituting $p(i,r,x,y,\xi)$ by its definition in the previous expression, one obtains
    \begin{align}\label{eqfeia2}
    \int_G\sum_{[\xi]\in\widehat{G}}\sum_{r,i=1}^n\Tr[\xi(x)\int_Gw_\xi(xz^{-1})w_\xi(yz^{-1})\sigma_A(i,r,z,\xi)\mathop{dz}u_i(y)\xi(y)^*]\mathop{dy}\overline{u_r(x)}\mathop{dx}.
    \end{align}
    Let 
    \begin{equation*}
        M(i,z,\xi)\doteq \int_Gw_{\xi}(yz^{-1})\xi(yz^{-1})^*u_i(y)dy\in \mathbb{C}^{d_\xi\times d_\xi}.
    \end{equation*}
      Then \eqref{eqfeia2} can be written as
    \begin{align*}
        \int_G\sum_{i,r=1}^n\sum_{[\xi]\in\widehat{G}}d_\xi \Tr[M(i,z,\xi)^*\sigma_A(i,r,z,\xi)M(r,z,\xi)]dz,
    \end{align*}
    which is non-negative by our hypothesis since
    \begin{align*}
    \Tr[M(i,z,\xi)^*\sigma_A(i,r,z,\xi)M(r,z,\xi)]&=\sum_{k=1}^{d_\xi}f_k^*M(i,z,\xi)^*\sigma_A(i,r,z,\xi)M(r,z,\xi)f_k\geq 0,
    \end{align*}
    as $M(r,z,\xi)f_k\in\mathbb{C}^{d_\xi\times 1}$, where $\{f_k\}_{k=1}^{d_\xi}$ is any orthonormal basis of column vectors in $\mathbb{C}^{d_\xi}$. 
\end{proof}

\begin{lemma}\label{lemma1}
    Following the previous notation, we have that $(i,r,x,[\xi])\mapsto p(i,r,x,x,\xi)-\sigma_A(x,\xi)$ is the symbol of a pseudo-differential  operator bounded from $H^s(G,E_0)$ to \\$H^{s-(m-(\rho-\delta))}(G,E_0)$, for any $s\in\R$.
\end{lemma}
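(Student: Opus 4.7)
The plan is to show that $d(i,r,x,\xi) := p(i,r,x,x,\xi) - \sigma_A(i,r,x,\xi)$ belongs to the symbol class $\mathcal{S}^{m-(\rho-\delta)}_{\rho,(\rho+\delta)/2}(G \otimes \text{End}(E_0))$; since $(\rho+\delta)/2 < \rho$, the conclusion then follows at once from Theorem \ref{pseudobounded}.

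The starting point is the identity
\begin{equation*}
d(i,r,x,\xi) = \int_G w_\xi(u)^2 \bigl[\sigma_A(i,r,u^{-1}x,\xi) - \sigma_A(i,r,x,\xi)\bigr]\,du,
\end{equation*}
obtained by the substitution $u=xz^{-1}$ in \eqref{P} together with $\|w_\xi\|_{L^2(G)}^2 = 1$ from Property (4) of Lemma \ref{lemmaw_xi}. I would then apply the Taylor expansion of $\sigma_A(i,r,\cdot,\xi)$ around $x$, writing $u^{-1}x = x\cdot(x^{-1}u^{-1}x)$ so that the Taylor polynomials involve $q_\alpha(x^{-1}ux)$. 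The key cancellation proceeds in two steps: first, the centrality of $w_\xi$ (Property (2)) combined with the automorphism $v=x^{-1}ux$ (of Jacobian one by unimodularity) reduces each integral $\int_G w_\xi(u)^2 q_\alpha(x^{-1}ux)\,du$ to $\int_G w_\xi(v)^2 q_\alpha(v)\,dv$; second, the inversion invariance of $w_\xi^2$ together with Lemmas \ref{evenodd} and \ref{int0} lets us replace $q_\alpha$ by its inversion-symmetric part $q_\alpha^{\mathrm{e}}(v) := \tfrac{1}{2}\bigl(q_\alpha(v)+q_\alpha(v^{-1})\bigr)$, which, being inversion-invariant and vanishing at $e_G$, vanishes to order two there in exponential coordinates.

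Combining with $\|\partial_x^\alpha \sigma_A\|_{op} \lesssim \langle\xi\rangle^{m+\delta|\alpha|}$ and the support estimate $|\exp^{-1}(v)| \lesssim \langle\xi\rangle^{-(\rho+\delta)/2}$ on $\mathrm{supp}(w_\xi)$ (Property (3)), one finds that the $|\alpha|=1$ contribution is bounded by $\langle\xi\rangle^{m-\rho}$ (crucially using the $q_\alpha^{\mathrm{e}}$ cancellation to gain a second power of $|\exp^{-1}(v)|$), the $|\alpha|=2$ contribution attains the worst size $\langle\xi\rangle^{m-(\rho-\delta)}$, and for $|\alpha|\geq 3$ the contribution is dominated by $\langle\xi\rangle^{m-|\alpha|(\rho-\delta)/2}$. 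The Taylor remainder is of order $\langle\xi\rangle^{m-N(\rho-\delta)/2}$, hence arbitrarily low for large $N$. Left-invariance of $\partial^\beta$ yields the analogous identity for $\partial_x^\beta d$ with $\sigma_A$ replaced by $\partial^\beta \sigma_A$ (of order $m+\delta|\beta|$); since $\delta \leq (\rho+\delta)/2$, the resulting estimate $\|\partial_x^\beta d\|_{op} \lesssim \langle\xi\rangle^{m-(\rho-\delta)+\delta|\beta|}$ fits the target class.

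The main obstacle is the analysis of the difference operators $\Delta_\xi^\gamma$ applied to $d$, since $w_\xi$ itself depends on $\xi$ and $\Delta_\xi$ does not commute with the $u$-integration. I would handle this step by exploiting Property (5) of Lemma \ref{lemmaw_xi}, namely $w_\xi\,\text{Id}_{d_\xi} \in \mathcal{S}^{d(\rho+\delta)/4}_{\rho,(\rho+\delta)/2}(G)$, together with the Leibniz-like formula of Proposition \ref{leibniz} to distribute $\Delta_\xi^\gamma$ across the factors of $w_\xi^2$ and $\sigma_A$. Each application of $\Delta_\xi$ produces a gain of $\langle\xi\rangle^{-\rho}$ while preserving the centrality/inversion cancellation structure exploited above, so that $\|\Delta_\xi^\gamma \partial_x^\beta d\|_{op} \lesssim \langle\xi\rangle^{m-(\rho-\delta)-\rho|\gamma|+(\rho+\delta)|\beta|/2}$. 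This establishes $d \in \mathcal{S}^{m-(\rho-\delta)}_{\rho,(\rho+\delta)/2}(G\otimes\text{End}(E_0))$, and Theorem \ref{pseudobounded} completes the proof.
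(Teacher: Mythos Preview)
Your approach is essentially the same as the paper's: Taylor-expand the difference, use parity of $w_\xi$ to kill the first-order terms, and control the remainder using the support and size estimates for $w_\xi$ together with Leibniz for the difference operators. Three simplifications are worth noting. First, using centrality of $w_\xi$ already at the change-of-variables stage (write $w_\xi(xz^{-1})=w_\xi(z^{-1}x)$ and substitute $z\mapsto xz^{-1}$) gives directly $d=\int_G w_\xi(z)^2\bigl[\sigma_A(i,r,xz^{-1},\xi)-\sigma_A(i,r,x,\xi)\bigr]\,dz$, so the Taylor polynomials involve $q_\gamma(z)$ with no conjugation step. Second, expanding only to order $2$ suffices: one may simply choose the $q_\gamma$ with $|\gamma|=1$ to be odd, so the first-order terms vanish, and the order-$2$ remainder already satisfies $\|R_x\|_{op}\lesssim |z|^2\langle\xi\rangle^{m+2\delta}$, which yields the sharp bound; there is no need to carry the expansion to general $N$. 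Third---and this addresses the vague point in your last paragraph---once the first-order terms are seen to vanish \emph{identically} in $(x,\xi)$ (since $\int_G w_\xi^2\,q_\gamma=0$ for every $\xi$), the identity $d=\int_G w_\xi^2 R_x\,dz$ holds before any $\Delta_\xi^\gamma$ is applied; one then applies $\Delta_\xi^\gamma\partial_x^\beta$ and Leibniz directly to this remainder integral, with no need to argue that the centrality/inversion cancellation is ``preserved'' under $\Delta_\xi$. As a bonus, since the $x$-dependence enters only through $\sigma_A$, the $\partial_x^\beta$-estimate carries $\delta|\beta|$ rather than $\tfrac{\rho+\delta}{2}|\beta|$, so one actually lands in the sharper class $\mathcal{S}^{m-(\rho-\delta)}_{\rho,\delta}(G\otimes\text{End}(E_0))$.
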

\begin{proof}
    By Theorem \ref{pseudobounded}, it is enough to show that 
    \begin{equation*}
        (i,r,x,[\xi])\mapsto p(i,r,x,x,\xi)-\sigma_A(i,r,x,\xi)\in\mathcal{S}^{m-(\rho-\delta)}_{\rho,\delta}(G\otimes\text{End}(E_0)).
    \end{equation*}
    Notice that, by Lemma \ref{lemmaw_xi}, 
    \begin{align*}
        p(i,r,x,x,\xi)-\sigma_A(i,r,x,\xi)&=\int_Gw_\xi(z)^2\sigma_A(i,r,xz^{-1},\xi)dz-\sigma_A(i,r,x,\xi)\\
        &=\int_Gw_\xi(z)^2(\sigma_A(i,r,xz^{-1},\xi)- \sigma_A(i,r,x,\xi))dz.
    \end{align*}
    Using the Taylor expansion of $\sigma_A(i,r,xz^{-1},\xi)$ at $x$ we can write
    \begin{align*}
\sigma_A(i,r,xz^{-1},\xi)=\sigma_A(i,r,x,\xi)+\sum_{|\gamma|=1}\partial_x^{\gamma}\sigma_A(i,r,x,\xi)q_\gamma(z)+R_x(i,r,z,\xi),
    \end{align*}
    where $R_x \in\mathcal{S}_{\rho,\delta}^{m+2\delta}(G\otimes \text{End}(E_0))$ is the Taylor remainder of order $2$ of $\sigma_A(\cdot,\cdot,x,\cdot)$. Notice that we can choose the ``polynomials" $q_\gamma$ so that they are odd for all $|\gamma|=1$, and using that $w_\xi$ is even, we can conclude that
    \begin{equation*}
        \int_Gw_\xi^2(z)q_\gamma(z)dz=0,
    \end{equation*}
    for all $|\gamma|=1$. Hence, for multi-indices $\alpha,
    \beta$:
    \begin{align*}
        \Delta^\alpha_\xi\partial_x^\beta(p(i,r,x,x,\xi)-\sigma_A(i,r,x,\xi))= \Delta^\alpha_\xi\int_Gw_\xi(z)^2 \partial_x^\beta R_x(i,r,z,\xi)\mathop{dz}.
    \end{align*}
    Applying the Leibniz rule, we can write the expression above as a sum of terms of the form
    \begin{align*}
\int_G\partial_x^\beta\Delta_\xi^{\alpha_1}R_{x}(i,r,z,\xi)\Delta_\xi^{\alpha_2}w_\xi(z)\Delta_\xi^{\alpha_3}w_\xi(z)dz,
    \end{align*}
    where $|\alpha_1+\alpha_2+\alpha_3|\geq |\alpha|$.
By Lemma 7.4 of \cite{VerIntri}, the remainder satisfies 
\begin{equation*}    \|\Delta_\xi^{\alpha_1}\partial_x^{\beta}R_x(i,r,z,\xi)\|_{op}\leq C|z|^2\max_{|\gamma|\leq 2}\sup_{x\in G}\|\Delta_\xi^{\alpha_1}\partial_x^\gamma\partial_x^\beta\sigma_A(i,r,x,\xi)\|_{op}\lesssim |z|^2\pseudoxi^{m-\rho|\alpha_1|+\delta(2+|\beta|)},
\end{equation*}
for all $z\in G$, $|z|\lesssim \langle\xi\rangle^{-\frac{\rho+\delta}{2}}$ on $\text{supp}(w_\xi)$ by Lemma \ref{lemmaw_xi}, and $|\text{supp}(w_\xi)|\lesssim \langle\xi\rangle^{-d\frac{(\rho+\delta)}{2}}$,  we conclude that 
\begin{align*}
    &\left\|\int_G\partial_x^\beta\Delta_\xi^{\alpha_1}R_{x}(i,r,z,\xi)\Delta_\xi^{\alpha_2}w_\xi(z)\Delta_\xi^{\alpha_3}w_\xi(z)dz\right\|_{op}\\
    &\qquad\qquad\qquad\qquad\qquad\qquad\qquad\lesssim \pseudoxi^{-\rho-\delta+m-\rho|\alpha_1|+\delta(2+|\beta|)+d\frac{(\rho+\delta)}{4}-\rho|\alpha_2|+d\frac{(\rho+\delta)}{4}-\rho|\alpha_3|-d\frac{(\rho+\delta)}{2}}\\
    &\qquad\qquad\qquad\qquad\qquad\qquad\qquad=\pseudoxi^{m-(\rho-\delta)-\rho|\alpha|+\delta|\beta|}, 
\end{align*}
which implies 
\begin{equation*}
    \|\Delta^\alpha_\xi\partial_x^\beta(p(i,r,x,x,\xi)-\sigma_A(i,r,x,\xi))\|_{op}\lesssim \pseudoxi^{m-(\rho-\delta)-\rho|\alpha|+\delta|\beta|}
\end{equation*}
proving the claim.
\end{proof}
\begin{lemma}\label{lemma2}
    Let $\sigma_P$ be the symbol of the pseudo-differential operator $P$ defined in Lemma \ref{lemmaP}. Then the pseudo-differential operator with symbol $\{\sigma_P(i,r,x,\xi)-p\}_{i,r=1}^n$ is bounded from $H^s(G,E_0)$ to $H^{s-(m-(\rho-\delta))}(G,E_0)$, for any $s\in\R$.
\end{lemma}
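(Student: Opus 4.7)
The plan is to apply the symbol-asymptotic formula for amplitude operators, Proposition \ref{propasymptotic}, to $P=\mathrm{Op}(p)$, and then reduce the problem to showing that every summand beyond $|\alpha|=0$ lies in a sufficiently good symbol class. Since Lemma \ref{lemmaP} provides $p\in\mathcal{A}^{m}_{\rho,(\rho+\delta)/2}(G\otimes\mathrm{End}(E_0))$ and $\delta<\rho$ gives $(\rho+\delta)/2<\rho$, Proposition \ref{propasymptotic} applies (with $\delta':=(\rho+\delta)/2$ playing the role of $\delta$) and produces, for every sufficiently large $N\in\N$,
\[
\sigma_P(i,r,x,\xi)-\sum_{|\alpha|<N}\frac{1}{\alpha!}\partial_y^\alpha\Delta_\xi^\alpha p(i,r,x,y,\xi)\big|_{y=x}\ \in\ \mathcal{S}^{m-(\rho-\delta)N/2}_{\rho,(\rho+\delta)/2}(G\otimes\mathrm{End}(E_0)),
\]
with the $|\alpha|=0$ summand equal to $p(i,r,x,x,\xi)$. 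For $N$ large enough the remainder is already of order at most $m-(\rho-\delta)$, so it suffices to prove
\[
\sum_{1\leq|\alpha|<N}\tfrac{1}{\alpha!}\,\partial_y^\alpha\Delta_\xi^\alpha p(i,r,x,y,\xi)\big|_{y=x}\ \in\ \mathcal{S}^{m-(\rho-\delta)}_{\rho,(\rho+\delta)/2}(G\otimes\mathrm{End}(E_0)).
\]

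The summands with $|\alpha|\geq 2$ belong to this class for free: the Leibniz-type estimate already carried out in the proof of Lemma \ref{lemmaP} yields $\|\partial_y^\alpha\Delta_\xi^\alpha p\|_{op}\lesssim\pseudoxi^{m-|\alpha|(\rho-\delta)/2}\leq\pseudoxi^{m-(\rho-\delta)}$, and the same argument delivers the full seminorm bounds with additional $\partial_x^{\beta'}$ and $\Delta_\xi^{\alpha'}$. The main obstacle, which I expect to be the crux of the argument, is the $|\alpha|=1$ contribution, for which the direct Leibniz bound gives only order $m-(\rho-\delta)/2$ and one must extract a further factor of $\pseudoxi^{-(\rho-\delta)/2}$ from symmetry cancellations. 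To do this I would mimic the argument of Lemma \ref{lemma1}: perform the change of variables $z=u^{-1}x$ in the defining integral \eqref{P}, use centrality of $w_\xi$ together with the identity $g\exp(tY)g^{-1}=\exp(t\,\mathrm{Ad}(g)Y)$ to rewrite
\[
X_{j,y}\, w_\xi(yx^{-1}u)\big|_{y=x}=\sum_{k}c_{jk}(x)\,X_k w_\xi(u)
\]
for smooth coefficients $c_{jk}(x)$, and then invoke Lemma \ref{evenodd} to conclude that $u\mapsto w_\xi(u)\,X_k w_\xi(u)$ is odd. A first-order Taylor expansion of $\sigma_A(i,r,u^{-1}x,\xi)$ at $u=e$, exactly as in Lemma \ref{lemma1}, annihilates the leading constant-in-$u$ term via Lemma \ref{int0}, while the Taylor remainder contributes an extra factor of $|u|\lesssim\pseudoxi^{-(\rho+\delta)/2}$ on $\mathrm{supp}\,w_\xi$ (Lemma \ref{lemmaw_xi}(3)), which is more than the $(\rho-\delta)/2$ we need. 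The symbol seminorms in further $\Delta_\xi^{\alpha'}$ and $\partial_x^{\beta'}$ are then obtained by combining the Leibniz formula (Proposition \ref{leibniz}) with the support-size estimate for $w_\xi$, in the same spirit as the calculations in the proof of Lemma \ref{lemma1}.

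Once $\sigma_P(i,r,x,\xi)-p(i,r,x,x,\xi)\in\mathcal{S}^{m-(\rho-\delta)}_{\rho,(\rho+\delta)/2}(G\otimes\mathrm{End}(E_0))$ has been established, the condition $(\rho+\delta)/2<\rho$ allows us to invoke Theorem \ref{pseudobounded}, which delivers the boundedness $H^s(G,E_0)\to H^{s-(m-(\rho-\delta))}(G,E_0)$ for every $s\in\R$, as claimed.
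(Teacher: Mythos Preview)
Your proposal is correct and follows essentially the same route as the paper: both invoke Proposition~\ref{propasymptotic} for the asymptotic expansion of $\sigma_P$, dispatch the remainder $R_N$ for large $N$, and handle the dangerous $|\alpha|=1$ term by a first-order Taylor expansion of $\sigma_A(i,r,\cdot,\xi)$ combined with the parity cancellation $\int_G w_\xi(u)\,X_k w_\xi(u)\,du=0$ from Lemmas~\ref{evenodd}--\ref{int0}. The only organizational difference is that the paper applies the Taylor expansion of $\sigma_A$ uniformly to all summands $1\leq|\alpha|\leq N$ (producing two pieces $I$ and $J$), whereas you bound the $|\alpha|\geq 2$ terms directly from the amplitude estimates of Lemma~\ref{lemmaP} and reserve the Taylor/parity argument for $|\alpha|=1$; your bookkeeping is marginally cleaner, and your explicit use of the adjoint identity $g\exp(tY)g^{-1}=\exp(t\,\mathrm{Ad}(g)Y)$ makes transparent a step the paper absorbs into its convention that $\partial^\alpha$ denotes an arbitrary left-invariant operator of order~$|\alpha|$. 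One small correction: the extra factor $|u|\lesssim\langle\xi\rangle^{-(\rho+\delta)/2}$, once combined with the $+\delta$ loss from the Taylor remainder of $\sigma_A$, yields exactly the missing $\langle\xi\rangle^{-(\rho-\delta)/2}$, not more.
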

\begin{proof}
    As in the proof of the previous lemma, it is enough to prove that
    \begin{equation*}
         (i,r,x,[\xi])\mapsto \sigma_P(i,r,x,\xi)-p(i,r,x,x,\xi)\in \mathcal{S}_{\rho,\frac{(\rho+\delta)}{2}}^{m-(\rho-\delta)}(G\otimes\text{End}(E_0)).
    \end{equation*}
  By Proposition \ref{propasymptotic}, we have the asymptotic expansion
 \begin{equation*}
     \sigma_P(i,r,x,\xi)\sim\sum_{\alpha\geq 0}\frac{1}{\alpha!}\Delta_\xi^\alpha\partial_y^\alpha p(i,r,x,y,\xi)|_{y=x},
 \end{equation*}
 with the properties specified in Proposition \ref{propasymptotic}.
 For now, recalling the formula \eqref{P}, notice that after a change of variables $ z^{-1}x\mapsto z$ one obtains
 \begin{equation*}
     \Delta_\xi^\alpha\partial_y^\alpha p(i,r,x,y,\xi)|_{y=x} = \Delta_\xi^\alpha\int_Gw_\xi(z)\partial_z^\alpha w_\xi(z)\sigma_A(i,r,z^{-1}x,\xi)\mathop{dz}.
 \end{equation*}
 Let $N\in \N$ to be chosen later. Define 
 \begin{equation*}
     S_N(i,r,x,\xi)=\sigma_P(i,r,x,\xi)-p(i,r,x,x)-R_N(i,r,x,\xi),
 \end{equation*}
 where $R_N\in\mathcal{S}_{\rho,\delta}^{m-\frac{(\rho+\delta)}{2}(N+1)}(G\otimes\text{End}(E_0))$ is the remainder term given by the asymptotic expansion above. Then applying the Taylor expansion to the formula above yields
 %Notice that by Remark 3.2 in \cite{?} we may replace the differences $\Delta$ by $\mathbb{D}$, with a suitable correction of multi-indices. One can then apply the "Leibniz formula" in Lemma \ref{?} such that both sides of the equation above are equal to a finite sum of terms
 \begin{align*}\label{Leibnizugly}
     S_N(i,r,x,\xi)&=\sum_{1\leq |\alpha|\leq N}\Delta_\xi^\alpha\int_Gw_\xi(z)\partial_z^\alpha w_\xi(z)\sigma_A(i,r,x,\xi)\mathop{dz}\\
     &+\sum_{1\leq |\alpha|\leq N}\Delta_\xi^\alpha\int_Gw_\xi(z)\partial_z^\alpha w_\xi(z)R_{\sigma,1}(i,r,z^{-1}x,\xi) \mathop{dz}\\
     &\doteq I(i,r,x,\xi)+J(i,r,x,\xi),
 \end{align*}
 where $R_{\sigma,1}$ is the remainder in the Taylor expansion of order $1$ of $\sigma_A$, centered at $x$. Therefore, if we can prove that $I,J$ and $R_N$ all belong to the specified symbol class, we will have proved the lemma. Let is examine each one at a time.
 First, we see that the first term in the sums in 
 $I$ and $J$ (of order 0) vanishes since
 \begin{equation*}
     \int_G w_\xi(z)\partial_z^\alpha w_\xi(z)\mathop{dz}=0,
 \end{equation*}
 for $|\alpha|=1$ because the functions $w_\xi$ and $\partial^\alpha w_\xi$ are even and odd, respectively, by Lemmas \ref{evenodd}, \ref{int0} and \ref{lemmaw_xi}. In particular, $I$ is given by 
 \begin{align*}
     \sum_{2\leq|\alpha|\leq N}\Delta_\xi^\alpha\int_G w_\xi(z)\partial_z^\alpha w_\xi(z)&\mathop{dz}\sigma_A(i,r,x,\xi)
     \\
     &=\sum_{2\leq|\alpha|\leq N}\sum_{\kappa,\lambda,\mu}C_{\kappa,\lambda,
     \mu}\int_G (\Delta_\xi^\kappa w_\xi(z))(\Delta_\xi^\lambda\partial_z^\alpha w_\xi(z))\mathop{dz}\Delta_{\xi}^\mu\sigma_A(i,r,x,\xi),
 \end{align*}
 for some constants $C_{\kappa,\lambda,
     \mu}$, where the sum is taken over $|\kappa+\lambda+\mu|\geq|\alpha|$, where here we have used the ``Leibniz's" rule for difference operators, that is, Lemma \ref{leibniz}. Recalling that by Lemma \ref{lemmaw_xi} $(x,[\xi])\mapsto w_\xi(x)\text{Id}_{d_\xi}$ is in $\mathcal{S}_{\rho,(\rho+\delta)/2}^{d(\rho+\delta)/4}(G)$, and using that $\sigma_A\in\mathcal{S}_{\rho,\delta}^{m}(G\otimes\text{End}(E_0))$, we get that
 \begin{equation}
\int_G|(\Delta_\xi^\kappa w_\xi(z))(\Delta_\xi^\lambda\partial_z^\alpha w_\xi(z))|\mathop{dz}\lesssim \pseudoxi^{d\frac{(\rho+\delta)}{2}-\rho(|\kappa|+|\lambda|)+|\alpha|\frac{(\rho+\delta)}{2}}\pseudoxi^{-d\frac{(\rho+\delta)}{2}}=\pseudoxi^{-\rho(|\kappa|+|\lambda|)+|\alpha|\frac{(\rho+\delta)}{2}},
 \end{equation}
 where we have taken into account that the support of $z\mapsto w_\xi(z)$ is contained in a set of measure $\sim \pseudoxi^{-d\frac{(\rho+\delta)}{2}}$, by Lemma \ref{lemmaw_xi}, and that taking differences in $\xi$ does not increase the support in $z$. Thus we get
 \begin{align*}
       \| I(i,r,x,\xi)\|_{op}&\lesssim\sum_{2\leq|\alpha|\leq N}\sum_{\kappa,\lambda,\mu}C_{\kappa,\lambda,
     \mu}\pseudoxi^{-\rho(|\kappa|+|\lambda|)+|\alpha|\frac{(\rho+\delta)}{2}}\pseudoxi^{m-\rho|\mu|}\\
     &\lesssim\sum_{2\leq|\alpha|\leq N}\pseudoxi^{m-\rho|\alpha|+|\alpha|\frac{(\rho+\delta)}{2}}\lesssim\pseudoxi^{m-(\rho-\delta)},
 \end{align*}
 since
 \begin{align*}
     -\rho|\alpha|+|\alpha|\frac{(\rho+\delta)}{2} = -|\alpha|\frac{(\rho-\delta)}{2}\leq -(\rho-\delta).
 \end{align*}
  Applying a similar argument to $\Delta_\xi^{\gamma}\partial_x^{\beta}I(i,r,x,\xi)$ we obtain the respective decay estimates, which allow us to conclude that $I\in \mathcal{S}_{\rho,\frac{(\rho+\delta)}{2}}^{m-(\rho-\delta)}(G\otimes\text{End}(E_0))$, as desired. We now consider the term $J$. In this case, 
  \begin{equation*}
      \|J(i,r,x,\xi)\|_{op}\lesssim \sum_{1\leq|\alpha|\leq N}\sum_{\kappa,\lambda,\mu}\int_G\|(\Delta_\xi^\kappa w_\xi(z))(\Delta_\xi^\lambda\partial_z^\alpha w_\xi(z))(\Delta_\xi^{\mu}R_{\sigma,1}(i,r,z^{-1}x,\xi)\|_{op}\mathop{dz}
  \end{equation*}
  where again we have used the Leibniz's rule for difference operators, and the middle sum is over a finite set satisfying $|\kappa+\lambda+\mu|\geq |\alpha|$. Using Lemma \ref{lemmaw_xi} and the estimates for the remainder in the Taylor expansion, we get that
  \begin{align*}
      \|J(i,r,x,\xi)\|_{op}&\lesssim \sum_{1\leq|\alpha|\leq N}\sum_{\kappa,\lambda,\mu}\int_{\text{supp}(w_\xi)}\pseudoxi^{d\frac{(\rho+\delta)}{2}-\rho(|\kappa|+|\lambda|)+|\alpha|\frac{(\rho+\delta)}{2})}|z|^{1}\mathop{dz}\times\\
      &\times \max_{|\gamma|\leq 1}\|\|\Delta_\xi^{\mu}\partial_y^{\gamma}\sigma_A(i,r,y,\xi)\|_{L^\infty(G)_y}\|_{op}\\
      &\lesssim\sum_{1\leq|\alpha|\leq N}\pseudoxi^{d\frac{(\rho+\delta)}{2}-\rho(|\alpha|)+|\alpha|\frac{(\rho+\delta)}{2}-\frac{(\rho+\delta)}{2}-d\frac{(\rho+\delta)}{2}+m+\delta}\\
      &\lesssim\pseudoxi^{m-\frac{(\rho-\delta)}{2}-\frac{(\rho-\delta)}{2}}=\pseudoxi^{m-(\rho-\delta)}.
  \end{align*}
  Again, applying a similar argument to $\Delta_\xi^{\gamma}\partial_x^{\beta}J(i,r,x,\xi)$ we obtain the respective decay estimates, which allow us to conclude that $J\in \mathcal{S}_{\rho,\frac{(\rho+\delta)}{2}}^{m-(\rho-\delta)}(G\otimes\text{End}(E_0))$, as desired. It only remains necessary to study the remainder $R_N$. In this case, by Proposition \ref{propasymptotic}, we have that $R_N\in\mathcal{S}_{\rho,\frac{\rho+\delta}{2}}^{m-\frac{(\rho+\delta)}{2}(N+1)}(G\otimes\text{End}(E_0))$. By taking $N$ sufficiently large, we obtain that $R_N$ belongs to the desired symbol class, which concludes the proof.\end{proof}

\begin{proof}[Proof of Theorem \ref{maintheo}]
    Let $Q=A-P$, with the operator $P$ as in Lemma \ref{lemmaP}. Let $u\in C^\infty(G,E_0)$. Then $A=P+Q$ and the positivity of $P$ implies
    \begin{align*}
        \Real(Au,u)_{L^2(G,E_0)} &= \Real(Pu,u)_{L^2(G,E_0)}+\Real(Qu,u)_{L^2(G,E_0)}\\
        &\geq \Real(Qu,u)_{L^2(G,E_0)}.
    \end{align*}
    Let now $P_0=\text{Op}(\{p(i,r,x,x,\xi)_{i,r=1}^n\})$. Writing $Q = (A-P_0)+(P_0-P)$, we have
    \begin{equation*}
        \sigma_{A-P_0}(i,r,x,\xi)=\sigma_A(i,r,x,\xi)-p(i,r,x,x,\xi)
    \end{equation*}
    and
    \begin{equation*}
        \sigma_{P_0-P}(i,r,x,\xi)=p(i,r,x,x,\xi)-\sigma_P(i,r,x,\xi).
    \end{equation*}
    Consequently, both $A-P_0$ and $P_0-P$ are bounded from $H^{\frac{m-(\rho-\delta)}{2}}(G,E_0)$ to $H^{-\frac{m-(\rho-\delta)}{2}}(G,E_0)$ by Lemmas \ref{lemma1} and \ref{lemma2}, respectively. It follows that $Q$ is also bounded between these spaces so that
    \begin{align*}
        |\Real(Qu,u)_{L^2(G,E_0)}|&\leq\|Qu\|_{H^{-\frac{m-(\rho-\delta)}{2}}(G,E_0)}\|u\|_{H^{\frac{m-(\rho-\delta)}{2}}(G,E_0)}\\
        &\lesssim \|u\|_{H^{\frac{m-(\rho-\delta)}{2}}(G,E_0)}^2,
    \end{align*}
    completing the proof of Theorem \ref{maintheo}.
\end{proof}
This result yields the sharp G\aa rding inequality for  homogeneous vector bundles over compact manifolds as a corollary. Before stating it precisely, we will recall the quantisation on homogeneous vector bundles as introduced in the setting of compact homogeneous manifolds, \cite{DuvanHomo}. But first, we recall the setting of compact homogeneous manifolds, also  following \cite{Wallach}.
\subsection{Sharp G\aa rding inequality on compact homogeneous vector bundles}
Let $p:E\to X$ be a vector bundle. A continuous map $s:X\to E$ is called a section of $E$ if for all $x\in X$, $p(s(x)) = p(x)$. We denote by $\Gamma (E)$ the set of all sections of $E$. If $X,E$ are smooth manifolds, we also define $\Gamma^\infty(E)$ the set of all {\it{smooth}} sections of $E$. If $X$ is orientable, the space $L^q(E)$, $1\leq q<\infty$, is then defined as the completion of the set of all smooth sections $s\in \Gamma^\infty(E)$ such that
\begin{equation}
\|s\|_{L^q(E)}\doteq\left(\smallint_X\|s(x)\|_{E_x}^q dx\right)^{\frac{1}{q}}<\infty.
\end{equation}
%We shall denote by $\mathcal{D}'(E)$ the set of continuous linear functionals over $\Gamma^\infty(E)$, to which we refer as {\it distributions} on $E$.
Now, consider $G$ be a compact Lie group and $K$ a closed subgroup of $G$. Let $M=G/K$ be equipped with its natural compact manifold topology. There exists a natural left action of $G$ on $M$ given by $g\cdot hK=ghK$, for every $g,h\in G$. We say that a vector bundle $p:E\to M$ is a homogeneous vector bundle over $M$ if $G$ acts on $E$ on the left and this action satisfies:
\begin{enumerate}
    \item $g\cdot E_x = E_{gx}$, for all $x\in M$, $g\in G$.
    \item The previously induced mappings from $E_x$ to $E_{gx}$ are linear.
\end{enumerate}
There is natural left action of $G$ on $\Gamma(E)$, $G\times \Gamma(E)\to \Gamma(E)$ given by
\begin{equation*}
    (g\cdot s)(x) = g\cdot s(g^{-1}x),
\end{equation*}
for all $x\in X$, $g\in G$.
Consider $p:E\to M$. Let $E_0=p^{-1}(K)$, be the fiber at the identity coset. As proved in \cite{Bott}, there exists $\tau\in\text{Hom}(K,\text{End}(E_0))$, and a natural right action of $K$ on $G\times E_0$ by $(g,v) = (gk,\tau^{-1}(k)v)$. We denote by $G\times_{\tau} E_0$ the quotient $(G\times E_0)/K$ under this action. In \cite{Bott}, Bott shows that $G\times_{\tau} E_0$ admits a natural homogeneous vector bundle structure, and that in fact there exists $\tau\in\text{Hom}(K,\text{End}(E_0))$ such that 
\begin{equation*}
     E\cong G\times_{\tau} E_0
\end{equation*}
as vector bundles. Now consider the vector subspace $C^{\infty}(G,E_0)^{\tau}\subset C^{\infty}(G,E_0)$ given by
\begin{equation*}
    C^{\infty}(G,E_0)^{\tau}=\{f\in C^\infty(G,E_0)|\forall g\in G,\forall k\in K, f(gk)=\tau(k)^{-1}f(g)\},
\end{equation*}
and likewise $L^2(G,E_0)^{\tau}\subset L^2(G,E_0)$ by
\begin{equation*}
    L^2(G,E_0)^{\tau}=\{f\in L^2(G,E_0)|  f(gk)=\tau(k)^{-1}f(g) \text{ for a. e. }g,k\in G\}.
\end{equation*}
It can be shown that the bijection $\chi_\tau:\Gamma^\infty(E)\to C^\infty(G,E_0)^\tau$, given by 
\begin{equation*}
    \chi_\tau(s)(g)\doteq g^{-1}\cdot s(gK)
\end{equation*}
for every $g\in G$ extends to a surjective isometry from $L^2(E)$ into $L^2(G,E_0)^{\tau}$, so that we may identify $\Gamma^\infty(E)\cong C^{\infty}(G,E_0)^{\tau}$, $L^2(E)\cong L^2(G,E_0)^{\tau}$. The Sobolev space $H^s(E)$ for $s\in \R$ is then defined as the completion of the set of smooth sections under the norm
\begin{equation*}
    \|u\|_{H^s(E)}\doteq\|\chi_\tau u\|_{H^s(G,E_0)}.
\end{equation*}
Now let $\Tilde{A}:\Gamma^\infty(E)\to\Gamma^\infty(E)$ be a continuous linear operator. Then it induces a continuous linear map $A:C^\infty(G,E_0)^\tau\to C^\infty(G,E_0)^\tau$ by
\begin{equation*}
    A=\chi_\tau\circ \Tilde{A}\circ\chi_\tau^{-1}.
\end{equation*}
If $A\in\Psi_{\rho,\delta}^m(G\otimes \text{End}(E_0))$, we say that $\Tilde{A}\in \Psi^m_{\rho,\delta}(E)$, and define its symbol by $\sigma_{\Tilde{A}}\doteq \sigma_A$, where $\sigma_A$ is the matrix-valued symbol defined in \eqref{symbol}. The quantisation formula \eqref{quantisation1} then implies 
\begin{align*}
    \Tilde{A}s(gK)=\chi_{\tau}^{-1}\left(\sum_{i,r=1}^{d_\tau}\sum_{[\xi]\in\widehat{G}}d_\xi\Tr\left(\xi(x)\sigma_{\tilde{A}}(i,r,g,\xi)\widehat{\chi_\tau s}(i,\xi)\right)e_{i}\right).
\end{align*}

\begin{corollary}\label{coro1}
    Let $p:E\to M=G/K$ be a homogeneous vector bundle over a compact homogeneous manifold $M$, where $K<G$ are compact Lie groups,  $E\cong G\times_{\tau} E_0$. Let $\Tilde{A}\in \Psi^m_{\rho,\delta}(E)$, $0\leq\delta<\rho\leq 1$, be such that its matrix-valued symbol $\sigma_A(x,\xi)$ is positive semi-definite for every $(x,[\xi])\in G\times \widehat{G}$, in the sense of Theorem \ref{maintheo}. Then there exists $C>0$ such that
    \begin{equation*}
        \Real(\Tilde{A}s,s)_{L^2(E)}\geq -C\|s\|^2_{H^{\frac{m-(\rho-\delta)}{2}}(E)},
    \end{equation*}
     for every $s\in\Gamma^\infty(E)$.
\end{corollary}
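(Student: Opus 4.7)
The plan is to transfer the sharp Gårding inequality on $C^\infty(G,E_0)$, given by Theorem \ref{maintheo}, down to the bundle $E$ via the canonical identification $\chi_\tau$. Since all the structure on $\Gamma^\infty(E)$ and $H^s(E)$ is defined \emph{through} $\chi_\tau$, the proof should be essentially a matter of unwinding definitions, with Theorem \ref{maintheo} doing the substantive work.

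First I would recall that, by the definition of $\Psi^m_{\rho,\delta}(E)$ given just before the corollary, the operator $A \doteq \chi_\tau \circ \tilde A \circ \chi_\tau^{-1}$ is an element of $\Psi^m_{\rho,\delta}(G \otimes \text{End}(E_0))$ whose matrix-valued symbol coincides with $\sigma_{\tilde A}$. In particular, $\sigma_A(x,\xi) = \sigma_{\tilde A}(x,\xi)$ is positive semi-definite at every $(x,[\xi]) \in G \times \widehat{G}$ in the sense specified in Theorem \ref{maintheo}. Consequently, Theorem \ref{maintheo} applies directly to $A$ on $C^\infty(G,E_0)$, yielding a constant $C>0$ with
\begin{equation*}
   \text{Re}(Av,v)_{L^2(G,E_0)} \geq -C \|v\|^2_{H^{(m-(\rho-\delta))/2}(G,E_0)}, \qquad v \in C^\infty(G,E_0).
\end{equation*}

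Next I would specialize this to $v = \chi_\tau s$ for $s \in \Gamma^\infty(E)$; note that $\chi_\tau s \in C^\infty(G,E_0)^\tau \subset C^\infty(G,E_0)$, so the inequality is available. Using that $\chi_\tau : L^2(E) \to L^2(G,E_0)^\tau$ is a surjective isometry and that $A \chi_\tau = \chi_\tau \tilde A$ by construction, I would compute
\begin{equation*}
   (\tilde A s, s)_{L^2(E)} = (\chi_\tau \tilde A s, \chi_\tau s)_{L^2(G,E_0)} = (A \chi_\tau s, \chi_\tau s)_{L^2(G,E_0)},
\end{equation*}
so in particular the real parts agree. Combined with the definition $\|s\|_{H^s(E)} = \|\chi_\tau s\|_{H^s(G,E_0)}$, the inequality above transfers verbatim, producing
\begin{equation*}
   \text{Re}(\tilde A s, s)_{L^2(E)} \geq -C \|s\|^2_{H^{(m-(\rho-\delta))/2}(E)}.
\end{equation*}

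There is essentially no obstacle: the only points requiring any care are verifying that $\chi_\tau$ intertwines $\tilde A$ with $A$ (immediate from the definition $A = \chi_\tau \tilde A \chi_\tau^{-1}$) and that both the $L^2$ and the Sobolev norms on $E$ are defined through $\chi_\tau$ as isometries. Once these are noted, the corollary is an immediate pullback of Theorem \ref{maintheo}, with the same constant $C$.
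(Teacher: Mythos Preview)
Your proposal is correct and follows essentially the same approach as the paper: set $A=\chi_\tau\tilde A\chi_\tau^{-1}$, apply Theorem \ref{maintheo} to $A$ with $v=\chi_\tau s$, and then use that $\chi_\tau$ is an $L^2$-isometry together with the definition $\|s\|_{H^s(E)}=\|\chi_\tau s\|_{H^s(G,E_0)}$ to transfer the inequality back to $E$. The paper's argument is the same, written slightly more tersely.
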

\begin{proof}
    Let $A=\text{Op}(\sigma_A)$ be the vector-valued operator associated with $\Tilde{A}$. For $s\in\Gamma^\infty(E)$, set $u=\chi_{\tau}s$. Theorem \ref{maintheo} implies that there exists $C>0$ such that
    \begin{align*}
        \Real(Au,u)_{L^2(G,E_0)}&\geq -C\|u\|^2_{H^{\frac{m-(\rho-\delta)}{2}}(G,E_0)}\\
        &=-C\|s\|^2_{H^{\frac{m-(\rho-\delta)}{2}}(E)}.
    \end{align*}
    %where the last equality is due to $\chi_\tau$ being an isometry. 
    The result then follows from the fact that
    \begin{equation*}
\Real(\Tilde{A}s,s)_{L^2(E)}=\Real(\chi_\tau^{-1}A\chi_{\tau}\chi_\tau^{-1}u,\chi_{\tau}^{-1}u)_{L^2(E)}=\Real(A\chi_{\tau}\chi_\tau^{-1}u,u)_{L^2(G,E_0)}=\Real(Au,u)_{L^2(G,E_0)},
    \end{equation*}
where we have used that $\chi_{\tau}^{-1}:L^2(G,E_0)^{\tau}\to L^2(E)$ is an isometry.
\end{proof}
Finally, note that if $E=G\times_{\widehat{1}}\mathbb{C}\cong M$ is the trivial bundle, then we can identify $\Gamma^\infty(E)\cong C^\infty(M)$, and $\chi_{\widehat{1}}:C^\infty(M)\to C^\infty(G)^K$ is just the projective lifting
\begin{equation*}
    \chi_{\widehat{1}}f(g)\equiv\dot{f}(g)\doteq f(gK),\,\forall g\in G.
\end{equation*}
The pseudo-differential operator classes and Sobolev spaces of $M$ are then defined likewise.
\begin{corollary}\label{coro2}
    Let $M=G/K$ be a compact homogeneous manifold, where $K<G$ are compact Lie groups. Let $\Tilde{A}\in\Psi^{m}_{\rho,\delta}(M),\,0\leq\delta<\rho\leq 1$, be such that its matrix valued symbol $\sigma_A=\sigma_A(x,\xi)\in \mathcal{S}^{m}_{\rho,\delta}(G)$ is positive semi-definite for every $(x,[\xi])\in G\times \widehat{G}$. Then there exists $C>0$ such that
    \begin{align*}
        \Real(\Tilde{A}u,u)_{L^2(M)}&\geq -C\|u\|^2_{H^{\frac{m-(\rho-\delta)}{2}}(M)},
    \end{align*}
    for every $u\in C^\infty(M).$
\end{corollary}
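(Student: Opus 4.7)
The plan is to reduce Corollary \ref{coro2} to the setting of Corollary \ref{coro1}, specialized to the trivial line bundle $E = G\times_{\widehat{1}}\mathbb{C}$, where $\widehat{1}$ denotes the trivial one-dimensional representation of $K$. In this case $E_0 = \mathbb{C}$, so $n=\dim E_0 = 1$, and $\Gamma^\infty(E)\cong C^\infty(G)^K$ via the projective lifting $\chi_{\widehat{1}}(f)(g) = f(gK)$. By the discussion immediately preceding the corollary, this identification extends to an isometry between $L^2(M)$ and $L^2(G)^K$, and (by definition of the Sobolev norm on $M$) between $H^s(M)$ and the corresponding subspace of $H^s(G)$ for every $s\in\R$.

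First, I would fix $u\in C^\infty(M)$ and set $\dot u = \chi_{\widehat{1}} u \in C^\infty(G)^K$. By the definition of $\Psi^m_{\rho,\delta}(M)$, the operator $A = \chi_{\widehat{1}}\circ \Tilde A\circ \chi_{\widehat{1}}^{-1}$ lies in $\Psi^m_{\rho,\delta}(G\otimes\mathrm{End}(\mathbb{C})) = \Psi^m_{\rho,\delta}(G)$, and its matrix-valued symbol is precisely $\sigma_A(x,\xi)$, which by hypothesis is positive semi-definite for every $(x,[\xi])\in G\times\widehat{G}$. Next, I would invoke Corollary \ref{coro1} (or equivalently apply Theorem \ref{maintheo} directly in the scalar case $n=1$) to the operator $A$ acting on $\dot u$, obtaining a constant $C>0$ such that
\begin{equation*}
    \Real(A\dot u,\dot u)_{L^2(G)} \geq -C\,\|\dot u\|^2_{H^{\frac{m-(\rho-\delta)}{2}}(G)}.
\end{equation*}

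Finally, I would translate this inequality back to $M$ by using that $\chi_{\widehat{1}}^{-1}: L^2(G)^K \to L^2(M)$ is an isometry and that $A$ intertwines $\Tilde A$: we have $\Real(\Tilde A u,u)_{L^2(M)} = \Real(A\dot u,\dot u)_{L^2(G)}$, and the Sobolev norm of $\dot u$ equals that of $u$ on $M$. This yields the desired estimate and completes the proof. I do not expect any real obstacle here, since all substantive analytic work has already been carried out in Theorem \ref{maintheo} and Corollary \ref{coro1}; the only care required is to verify that the scalar case $n=1$ with trivial representation $\tau=\widehat{1}$ is indeed admissible in the previous corollary, and that the isometry $\chi_{\widehat{1}}$ preserves real parts of inner products, both of which are immediate from the definitions.
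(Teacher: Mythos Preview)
Your proposal is correct and follows precisely the approach intended by the paper: Corollary~\ref{coro2} is the special case of Corollary~\ref{coro1} for the trivial line bundle $E=G\times_{\widehat{1}}\mathbb{C}$, and the paragraph preceding the statement sets up exactly the identifications $\Gamma^\infty(E)\cong C^\infty(M)$ and $\chi_{\widehat{1}}=$ projective lifting that you use. The paper gives no separate proof for this corollary because, as you correctly observe, all the work is already done.
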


\section{Application for vector-valued diffusion problems}\label{PDE}

In this section we will apply our results to study well-posedness of pseudo-differential evolution Cauchy problems. Let $T>0$, we are interested in proving existence and uniqueness for the following Cauchy problem
\begin{align}\label{Cauchyproblem}
    \begin{cases}
        \frac{\partial v}{\partial t}-K(t,x)v=f,\quad v\in\mathcal{D}'((0,T)\times G,E_0),\\
        v(0)=u_0,
    \end{cases}
\end{align}
where $u_0\in L^2(G)$, $f\in L^2([0,T],L^2(G,E_0))$, $K(t)=K(t,x)\in\Psi^{m}_{\rho,\delta}(G\otimes \text{End}(E_0))$, $0\leq \delta<\rho\leq1$, $m\leq \rho-\delta$. For that, we will assume appropriate conditions on $K(t)$ in order to apply our previous results. We begin with the following proposition:
\begin{prop}\label{propestimate}
    Let $K(t)=K(t,x)\in\Psi^{m}_{\rho,\delta}(G\otimes \text{End}(E_0))$, for every $0\leq t\leq T$, and some $0\leq \delta<\rho\leq1$, be a pseudo-differential operator of order $m\leq \rho-\delta$, such that the matrix valued symbol $\sigma_{-K}(t,x,\xi)$ is positive semi-definite in the sense of Theorem \ref{maintheo}, and depends continuously on $t$. Then if $v\in C^{1}([0,T],L^2(G,E_0))\cap C([0,T],H^{\frac{m-(\rho-\delta)}{2}})$, then there exist $C',C''>0$ such that
\begin{equation}\label{energy1} 
\|v(t)\|_{L^2(G,E_0))}^2\leq\left(C'\|v(0)\|_{L^2(G,E_0))}^2+C''\int_0^T\|Q(t)v(t)\|_{L^2(G,E_0)}^2dt\right),
\end{equation}
and
\begin{equation}\label{energy2} 
\|v(t)\|_{L^2(G,E_0))}^2\leq\left(C'\|v(T)\|_{L^2(G,E_0))}^2+C''\int_0^T\|Q(t)^*v(t)\|_{L^2(G,E_0)}^2dt\right),
\end{equation}
for every $0\leq t\leq T$, where $Q(t)=\partial_t-K(t)$, and $Q(t)^*$ denotes the adjoint of $Q(t)$.
\end{prop}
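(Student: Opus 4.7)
The plan is to derive both inequalities by a standard energy-estimate argument, using Theorem \ref{maintheo} exactly once to convert the G\aa rding lower bound for $-K(t)$ into an upper bound for $\operatorname{Re}(K(t)v,v)_{L^2}$, and then closing with Gr\"onwall. The backward estimate in $Q(t)^*$ will be obtained by time reversal, leveraging the general identity $\operatorname{Re}(Tv,v)_{L^2} = \operatorname{Re}(T^*v,v)_{L^2}$.

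First I would fix $v \in C^1([0,T],L^2(G,E_0)) \cap C([0,T],H^{(m-(\rho-\delta))/2}(G,E_0))$ and differentiate the $L^2$ norm:
\begin{equation*}
\frac{d}{dt}\|v(t)\|_{L^2(G,E_0)}^2 = 2\operatorname{Re}(v'(t),v(t))_{L^2(G,E_0)} = 2\operatorname{Re}(K(t)v(t),v(t))_{L^2(G,E_0)} + 2\operatorname{Re}(Q(t)v(t),v(t))_{L^2(G,E_0)},
\end{equation*}
using $v'(t) = K(t)v(t) + Q(t)v(t)$. Since $\sigma_{-K(t)}$ is positive semi-definite, Theorem \ref{maintheo} applied to $-K(t)$ gives $\operatorname{Re}(-K(t)v,v)_{L^2} \geq -C\|v\|_{H^{(m-(\rho-\delta))/2}}^2$, i.e.\
\begin{equation*}
\operatorname{Re}(K(t)v(t),v(t))_{L^2(G,E_0)} \leq C\|v(t)\|_{H^{(m-(\rho-\delta))/2}(G,E_0)}^2 \leq C\|v(t)\|_{L^2(G,E_0)}^2,
\end{equation*}
where the last step uses $m \leq \rho-\delta$, so $(m-(\rho-\delta))/2 \leq 0$ and hence $\|\cdot\|_{H^{(m-(\rho-\delta))/2}} \leq \|\cdot\|_{L^2}$. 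The constant $C$ can be chosen uniform in $t$ because $\sigma_{-K(t)}$ depends continuously on $t \in [0,T]$. Combining with $2\operatorname{Re}(Q(t)v,v)_{L^2} \leq \|Q(t)v\|_{L^2}^2 + \|v\|_{L^2}^2$ yields a differential inequality of the form
\begin{equation*}
\frac{d}{dt}\|v(t)\|_{L^2(G,E_0)}^2 \leq (2C+1)\|v(t)\|_{L^2(G,E_0)}^2 + \|Q(t)v(t)\|_{L^2(G,E_0)}^2,
\end{equation*}
and Gr\"onwall's lemma immediately produces \eqref{energy1} with $C' = e^{(2C+1)T}$ and $C'' = e^{(2C+1)T}$.

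For the backward estimate \eqref{energy2} I would set $\tilde{v}(t) \doteq v(T-t)$ and compute $\partial_t \tilde{v}(t) = -v'(T-t) = -K(T-t)\tilde{v}(t) - Q(T-t)v(T-t)$. Writing $Q(T-t)^* = -\partial_{T-t} - K(T-t)^*$ and reorganising, one checks that $\tilde{Q}(t)\tilde{v}(t) \doteq (\partial_t - K(T-t)^*)\tilde{v}(t) = Q(T-t)^* v(T-t)$. Expanding $\frac{d}{dt}\|\tilde{v}(t)\|_{L^2}^2$ then gives the analogue of the previous identity but with $K(T-t)^*$ in place of $K(t)$. The crucial observation is that
\begin{equation*}
\operatorname{Re}(K(T-t)^*\tilde v,\tilde v)_{L^2(G,E_0)} = \operatorname{Re}(K(T-t)\tilde v,\tilde v)_{L^2(G,E_0)},
\end{equation*}
so the G\aa rding-type upper bound derived above transfers verbatim to $K(T-t)^*$. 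Applying the same Gr\"onwall argument to $\tilde{v}$ and changing variables back to $r = T-t$ in the resulting integral completes the proof of \eqref{energy2}.

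The only delicate point is step three, namely ensuring that the constant in the G\aa rding estimate is uniform in $t \in [0,T]$; this is where the continuous dependence hypothesis on $\sigma_{-K(t)}$ is used, since the constant $C$ in Theorem \ref{maintheo} depends on finitely many seminorms of the symbol, each of which is continuous in $t$ and hence bounded on the compact interval $[0,T]$. Everything else is routine manipulation.
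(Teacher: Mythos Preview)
Your proof is correct and follows essentially the same approach as the paper: differentiate $\|v(t)\|_{L^2}^2$, apply the sharp G\aa rding inequality from Theorem \ref{maintheo} together with $m\leq\rho-\delta$ to bound $\operatorname{Re}(K(t)v,v)$, close via Gr\"onwall, and obtain \eqref{energy2} by time reversal using $\operatorname{Re}(K^*v,v)=\operatorname{Re}(Kv,v)$. Your explicit remark on the uniformity of the G\aa rding constant in $t$ is a welcome clarification that the paper leaves implicit.
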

\begin{proof}
    First, define
    \begin{equation*}
        f(t)\doteq Q(t)v(t),
    \end{equation*}
    where $Q(t)=\partial_t-K(t),\,0\leq t\leq T$. Then 
    \begin{align*}
        \frac{d}{dt}\|v(t)\|_{L^2(G,E_0)}^2&=\left(\frac{dv(t)}{dt},v\right)_{L^2(G,E_0)}+\left(v,\frac{dv(t)}{dt}\right)_{L^2(G,E_0)}\\
        &=\left(f(t)+K(t)v(t),v\right)_{L^2(G,E_0)}+\left(v,f(t)+K(t)v(t)\right)_{L^2(G,E_0)}\\
        &=2\Real(K(t)v(t),v(t))+2\Real(f(t),v(t)).
    \end{align*}
    From the parallelogram law 
    \begin{equation*}
        2\Real(f(t),v(t))\leq 2\|f(t)\|_{L^2(G,E_0)}^2+2\|v(t)\|_{L^2(G,E_0)}^2.
    \end{equation*}
    From the sharp Garding inequality \eqref{ineqtheo}, and the continuity of the symbol on $t\in[0,T]$: 
    \begin{equation*}
        \Real(-Kv,v)_{L^2(G,E_0)}^2\geq -C\|v\|^2_{H^{\frac{m-(\rho-\delta)}{2}}(G,E_0)}\geq -C\|v\|^2_{L^2(G,E_0)},
    \end{equation*}
    where the last inequality comes from our restriction on $m$ and the continuous inclusion $ L^2(G,E_0)\hookrightarrow H^{-\epsilon}(G,E_0)$. Together, these inequalities then imply
    \begin{equation*}
        \frac{d}{dt}\|v(t)\|_{L^2(G,E_0)}^2\leq C'\|v(t)\|_{L^2(G,E_0)}^2+C''\|f(t)\|_{{L^2(G,E_0)}}^2,
    \end{equation*}
    for $C'=(C+2), C''=2>0$. Applying Gronwall's Lemma to this inequality we obtain the desired estimate. The other estimate follows analogously, applying the previous analysis to $v(T-\cdot)$ instead of $v(\cdot)$, $f(T-\cdot)$ instead of $f(\cdot)$, and $Q(t)^*=-\partial_t-K(t)^*$ instead of $Q(t)$ and using that $\Real(K(T-t)^*)=\Real(K(T-t))$.
\end{proof}
\begin{theorem}\label{theoexist}
    Let $u_0\in L^2(G,E_0)$, $f\in L^2([0,T],L^2(G,E_0))$. Let also $K(t)$ satisfy the conditions of Proposition \ref{propestimate}. Then there exists a unique $v\in C^1([0,T],L^2(G,E_0))\cap C([0,T],$\\$H^{\frac{m-(\rho-\delta)}{2}}(G,E_0))$ which solves the Cauchy problem \eqref{Cauchyproblem}. Moreover, $v$ satisfies the energy estimate
    \begin{equation}\label{energy} \|v(t)\|_{L^2(G,E_0))}^2\leq\left(C'\|v(0)\|_{L^2(G,E_0))}^2+C''\int_0^T\|f(t)\|_{L^2(G,E_0)}^2dt\right),
\end{equation}
for every $0\leq t\leq T$.
\end{theorem}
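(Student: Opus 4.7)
Uniqueness and the energy estimate are essentially free from Proposition \ref{propestimate}. If $v_1,v_2$ are two solutions in the stated class with identical data, then $w\doteq v_1-v_2$ lies in the same class, satisfies $Q(t)w(t)=0$ and $w(0)=0$, so estimate \eqref{energy1} forces $\|w(t)\|_{L^2(G,E_0)}^2\leq 0$ and hence $w\equiv 0$. For any solution $v$ of \eqref{Cauchyproblem}, the energy estimate \eqref{energy} is just \eqref{energy1} read with $Q(t)v(t)=f(t)$. Hence the whole problem reduces to producing a solution in the required regularity class.

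For existence the plan is to run the classical adjoint / Hahn-Banach argument, the dissipation ingredient now being the reverse energy estimate \eqref{energy2} for $Q(t)^{*}=-\partial_t-K(t)^{*}$. Let $V$ denote the subspace of $\varphi\in C^1([0,T],L^2(G,E_0))\cap C([0,T],H^{(m-(\rho-\delta))/2}(G,E_0))$ with $\varphi(T)=0$, and on $Q(t)^{*}V\subset L^2([0,T],L^2(G,E_0))$ define
\begin{equation*}
\ell(Q(t)^{*}\varphi)\doteq\int_0^T(f(t),\varphi(t))_{L^2(G,E_0)}\,dt+(u_0,\varphi(0))_{L^2(G,E_0)}.
\end{equation*}
Using $\varphi(T)=0$, estimate \eqref{energy2} gives both the injectivity of $Q(t)^{*}$ on $V$ (so $\ell$ is unambiguously defined) and the bound
\begin{equation*}
|\ell(Q(t)^{*}\varphi)|\leq C\bigl(\|f\|_{L^2([0,T];L^2(G,E_0))}+\|u_0\|_{L^2(G,E_0)}\bigr)\,\|Q(t)^{*}\varphi\|_{L^2([0,T];L^2(G,E_0))}.
\end{equation*}
Hahn-Banach followed by Riesz representation then produces $v\in L^2([0,T],L^2(G,E_0))$ with $\int_0^T(v,Q(t)^{*}\varphi)_{L^2(G,E_0)}\,dt=\ell(Q(t)^{*}\varphi)$ for every $\varphi\in V$; choosing $\varphi$ supported in $(0,T)$ reads off the distributional equation $Q(t)v=f$ on $(0,T)\times G$, and letting $\varphi(0)$ range freely recovers $v(0)=u_0$ in the weak sense.

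The remaining and, in my view, main step is to upgrade this weak solution to the regularity class $C^1([0,T],L^2(G,E_0))\cap C([0,T],H^{(m-(\rho-\delta))/2}(G,E_0))$. I would do this by a Galerkin / Friedrichs spectral truncation: project \eqref{Cauchyproblem} onto the finite-dimensional span of the coefficient functions of those $[\xi]\in\widehat{G}$ with $\langle\xi\rangle\leq \varepsilon^{-1}$, which, thanks to the continuity in $t$ of $\sigma_{K}$ and the $L^2$-boundedness of $K(t)$ on each such finite-dimensional subspace, turns the problem into a Banach-space ODE solvable by Picard iteration with approximants $v_\varepsilon\in C^1([0,T],L^2(G,E_0))$. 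Applying \eqref{energy1} to $v_\varepsilon$ and to differences $v_\varepsilon-v_{\varepsilon'}$ yields uniform-in-$\varepsilon$ bounds in $C([0,T],L^2(G,E_0))$ and, together with Theorem \ref{pseudobounded} and the equation $\partial_t v_\varepsilon=K(t)v_\varepsilon+f_\varepsilon$, in $C^1([0,T],H^{-\max(m,0)}(G,E_0))$; weak-$*$ compactness plus uniqueness identifies the limit with the weak solution $v$ obtained above, yielding the claimed regularity. The subtlest point, and the one I expect to be the principal obstacle, is promoting weak initial continuity to the strong statement $v(0)=u_0$ in $L^2(G,E_0)$ and obtaining genuine $C^1_tL^2$ regularity up to the endpoints: this should be achieved by combining weak lower-semicontinuity of the $L^2$ norm with the two-sided control provided by \eqref{energy1}--\eqref{energy2} applied to the $v_\varepsilon$, in the standard way.
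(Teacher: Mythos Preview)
Your argument matches the paper's. Uniqueness and the energy estimate are derived from Proposition \ref{propestimate} exactly as you describe, and existence proceeds by the identical Hahn--Banach/Riesz scheme on the adjoint: the paper defines your functional $\ell$ (called $\beta$ there) on $Q^{*}E$ with $E$ your space $V$, bounds it using \eqref{energy2}, extends and represents it by some $v\in L^2([0,T],L^2(G,E_0))$, reads off $Qv=f$ by testing against $\phi\in C_0^\infty((0,T),C^\infty(G))$, and recovers $v(0)=u_0$ by an integration-by-parts identity for general $\phi\in E$.

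The only substantive difference is your final paragraph. The paper does not carry out any regularity upgrade: after obtaining $v\in L^2([0,T],L^2(G,E_0))$ and verifying $Qv=f$ and $v(0)=u_0$, it passes directly to uniqueness, leaving the promotion to $C^1([0,T],L^2)\cap C([0,T],H^{(m-(\rho-\delta))/2})$ unaddressed --- and indeed its integration-by-parts computation for the initial condition already tacitly presupposes that regularity. Your Galerkin/spectral-truncation step is a reasonable way to supply what is missing (note that since the Fourier projector $P_\varepsilon$ is self-adjoint, $\Real(-P_\varepsilon K(t)P_\varepsilon u,u)=\Real(-K(t)P_\varepsilon u,P_\varepsilon u)$, so the G\aa rding estimate transfers to the truncated problem and \eqref{energy1} applies uniformly in $\varepsilon$). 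In this respect your outline is actually more complete than the paper's own proof.
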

\begin{proof}
 We have already proved the estimate \eqref{energy} for $v\in X_m\doteq C^1([0,T],L^2(G))\cap$$ \\C([0,T],H^{\frac{m-(\rho-\delta)}{2}}(G,E_0))$. We will now prove that the solution exists and belongs to this space. Let us again denote $Q(t)\doteq \partial_t-K(t)$. Consider the space
 \begin{equation*}
     E\doteq\{\phi\in C^1([0,T],L^2(G))\cap C([0,T],H^{\frac{m-(\rho-\delta)}{2}}(G,E_0))|\phi(T)=0\},
 \end{equation*}
 and also $Q^{*}E\doteq \{Q^{*}\phi|\phi\in E\}$. Define the linear functional $\beta\in (Q^{*}E)' $ by 
 \begin{equation*}
\beta(Q^{*}\phi)\doteq\int_{0}^{T}(\phi(t),f(t))_{L^2(G,E_0)}dt+(\phi(0),u_0)_{L^2(G,E_0)}.
 \end{equation*}
It follows from \eqref{energy2} that for every $\phi\in E$:
\begin{equation}\label{ineqphi}
    \|\phi(t)\|_{L^2(G,E_0)}^2\leq C''\int_0^T\|Q(t)^*\phi(t)\|_{L^2(G,E_0)}^2dt.
\end{equation}
Hence
\begin{align*}
    |\beta(Q^*\phi)|&\leq \int_0^T\|f(t)\|_{L^2(G,E_0)}\|\phi(t)\|_{L^2(G,E_0)}dt+\|u_0\|_{L^2(G,E_0)}\|\phi(0)\|_{L^2(G,E_0)}\\
    &\leq \|f\|_{L^2([0,T],L^2(G,E_0))}\|\phi\|_{L^2([0,T],L^2(G,E_0))}+\|u_0\|_{L^2(G,E_0)}\|\phi(0)\|_{L^2(G,E_0)}\\
    &\lesssim_T \left(\|f\|_{L^2([0,T],L^2(G,E_0))}+\|u_0\|_{L^2(G,E_0)}\right)\|Q^*\phi\|_{L^2([0,T],L^2(G,E_0))},
\end{align*}
where the last inequality is partly due to the estimate \eqref{ineqphi}. This shows that $\beta$ is a bounded functional on $\mathcal{I}\doteq Q^*E\cap L^2([0,T],L^2(G,E_0))$, with the topology induced by the usual norm in $L^2([0,T],L^2(G,E_0))$. The Hahn-Banach extension theorem then implies that we may extend $\beta$ to a bounded functional $\Tilde{\beta}$ defined on all $L^2([0,T],L^2(G,E_0))$, and by the Riesz representation theorem for Hilbert spaces, there exists $v\in L^2([0,T],L^2(G))$ such that
\begin{equation*}
    \Tilde{\beta}(\psi)=(\psi,v)_{L^2([0,T],L^2(G,E_0))},
\end{equation*}
for every $\psi\in L^2([0,T],L^2(G,E_0))$. In particular, for $\psi=Q^*\phi\in\mathcal{I}$, we have
\begin{equation*}
    \beta(Q^*\phi)=\Tilde{\beta}(Q^*\phi)=(Q^*\phi,v)_{L^2([0,T],L^2(G,E_0))}.
\end{equation*}
Notice that we can view $C^\infty_0((0,T),C^\infty(G))$ as a subspace of $E$, that is
\begin{equation*}
    C^\infty_0((0,T),C^\infty(G))\subset \{\phi\in C^1([0,T],L^2(G))\cap C([0,T],H^{\frac{m-(\rho-\delta)}{2}}(G,E_0))|\phi(T)=0\},
\end{equation*}
and so we have the identity
\begin{align*}
    (\phi,f)_{L^2([0,T],L^2(G,E_0))}&=\int_0^T(\phi(t),f(t))_{L^2(G,E_0)}dt\\
    &=\int_0^T(\phi(t),f(t))_{L^2(G,E_0)}dt+(\phi(0),u_0)_{L^2(G,E_0)}\\
    &=\beta(Q^*\phi)\\
    &=(Q^*\phi,v)_{L^2([0,T],L^2(G,E_0))},
\end{align*}
for every $\phi\in C^\infty_0((0,T),C^\infty(G))$, as $\phi(0)\equiv 0$. This implies $v\in\text{Dom}(Q^{**})$, and from $Q^{**}=Q$ we obtain
\begin{equation*}
    (\phi,Qv)=(Q^*\phi,v)=(\phi,f),\,\forall \phi\in C^\infty_0((0,T),C^\infty(G)).
\end{equation*}
This implies $Qv=f$. Moreover, we claim that $v(0)=u_0$.
Indeed, note that for $\phi\in E$, integration by parts yields
\begin{align*}
    \int_0^T\left({\phi(t)},\frac{dv(t)}{dt}\right)dt= -(\phi(0),v(0))-\int_0^T\left(\frac{d\phi(t)}{dt},v(t)\right)dt.
\end{align*}
On the other hand, it is also true that
\begin{align*}
    \int_0^T\left(\phi(t),\frac{dv(t)}{dt}\right)dt&=\int_0^T(\phi(t),f(t)+K(t)v(t))dt\\
    &=\int_0^T(\phi(t),f(t))dt+\int_0^T(K(t)^*\phi(t),v(t))dt.
\end{align*}
Combining these two, we get that
\begin{equation*}
   \int_0^T(\phi(t),f(t))dt+\int_0^T(K(t)^*\phi(t),v(t))dt= -(\phi(0),v(0))-\int_0^T\left(\frac{d\phi(t)}{dt},v(t)\right)dt,
\end{equation*}
so that
\begin{align*}
    \int_0^T(\phi(t),f(t))dt+(\phi(0),v(0))&=\int_0^T\left(-\frac{d\phi(t)}{dt}-K(t)^*\phi(t),v(t)\right)dt\\
    &=\int_0^T(Q(t)^*\phi(t),v(t))dt\\
    &=(Q^*\phi,v)=\beta(Q^*\phi)\\
    &=\int_0^T(\phi(t),f(t))dt+(\phi(0),u_0),
\end{align*}
and so $v(0)=u_0$. Finally, to prove the uniqueness, assume that $u\in C^1([0,T],L^2(G,E_0))\cap C([0,T],H^{\frac{m-(\rho-\delta)}{2}(G,E_0)})$ is also a solution to \eqref{Cauchyproblem}. Then the function $w\doteq v-u$ solves the Cauchy problem
\begin{equation*}
    \begin{cases}
        \frac{dw(t)}{dt}-K(t,x)w=0,\\
        w(0)=0,
    \end{cases}
\end{equation*}
and from Proposition \ref{propestimate}, $\|w(t)\|_{L^2(G,E_0)}=0$ for all $0\leq t\leq T$. Hence we have that $v(t,x)=u(t,x)$ for all $t\in[0,T]$ and a.e. $x\in G$. 
\end{proof}
We may apply the ideas of Corollaries \ref{coro1} and \ref{coro2} to obtain the following corollary to Theorem \ref{theoexist}.
\begin{corollary}
Let $M=G/K$ be a compact homogeneous manifold. Let $u_0\in L^2(M)$, $f\in L^2([0,T],L^2(M))$. Let $\tilde{K}(t)=\tilde{K}(t,x)\in\Psi^{m}_{\rho,\delta}(M)$, for every $0\leq t\leq T$, and some $0\leq \delta<\rho\leq1$, be a pseudo-differential operator of order $m\leq \rho-\delta$, such that the matrix valued symbol $\sigma_{-K}(t,x,\xi)$ is positive semi-definite for all $t,x,\xi$, and depends continuously on $t$. Then there exists a unique $v\in C^1([0,T],L^2(M))\cap C([0,T],H^{\frac{m-(\rho-\delta)}{2}}(M))$ which solves the Cauchy problem:
\begin{align}
    \begin{cases}
        \frac{\partial v}{\partial t}-K(t,x)v=f,\\
        v(0,x)=u_0(x).
    \end{cases}
\end{align}
Moreover, $v$ satisfies the energy estimate
    \begin{equation} \|v(t)\|_{L^2(M)}^2\leq\left(C'\|v(0)\|_{L^2(M)}^2+C''\int_0^T\|f(t)\|_{L^2(M)}^2dt\right),
\end{equation}
for every $0\leq t\leq T$.
\end{corollary}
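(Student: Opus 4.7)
The plan is to lift the Cauchy problem from $M = G/K$ to $G$ and apply Theorem~\ref{theoexist}, in the same spirit as Corollaries~\ref{coro1} and~\ref{coro2}. Concretely, I would set $\dot u_0 \doteq \chi_{\widehat 1} u_0 \in L^2(G)$ and $\dot f(t) \doteq \chi_{\widehat 1}(f(t)) \in L^2([0,T], L^2(G))$, and consider the lifted operator $K(t) \doteq \chi_{\widehat 1} \circ \tilde K(t) \circ \chi_{\widehat 1}^{-1}$ extended to $C^\infty(G)$ via the definition of $\Psi^m_{\rho,\delta}(M)$. By that definition, $K(t) \in \Psi^m_{\rho,\delta}(G)$ and shares the matrix symbol of $\tilde K(t)$, so the positive semi-definiteness of $\sigma_{-\tilde K}$ and its continuous dependence on $t$ transfer to $\sigma_{-K}$. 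Theorem~\ref{theoexist} then yields a unique $\dot v \in C^1([0,T], L^2(G)) \cap C([0,T], H^{(m-(\rho-\delta))/2}(G))$ solving
\begin{equation*}
    \partial_t \dot v - K(t) \dot v = \dot f, \qquad \dot v(0) = \dot u_0,
\end{equation*}
together with the corresponding energy estimate on $G$.

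The key step is to descend $\dot v$ back to $M$, i.e.\ to show $\dot v(t) \in L^2(G)^K$ for every $t \in [0,T]$. For $k \in K$, let $R_k$ denote right translation by $k$. Since $\tilde K(t)$ is well defined on $M$, the lifted operator $K(t)$ commutes with $R_k$ on $C^\infty(G)^K$ and in fact on all of $C^\infty(G)$ by its construction; combined with the right $K$-invariance of $\dot u_0$ and $\dot f(t)$ (inherited from being images of $\chi_{\widehat 1}$), this implies that $\dot v \circ R_k$ solves the same Cauchy problem as $\dot v$. The uniqueness part of Theorem~\ref{theoexist} then forces $\dot v \circ R_k = \dot v$, hence $\dot v(t) \in L^2(G)^K$ for every $t$. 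Setting $v(t) \doteq \chi_{\widehat 1}^{-1}(\dot v(t))$ produces the solution on $M$, and since $\chi_{\widehat 1}$ extends to an isometry $L^2(M) \to L^2(G)^K$ and, by definition of the Sobolev norm on $M$, also to an isometry $H^s(M) \to H^s(G)^K$ for every $s \in \R$, the required regularity $v \in C^1([0,T], L^2(M)) \cap C([0,T], H^{(m-(\rho-\delta))/2}(M))$ is preserved.

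Uniqueness of $v$ on $M$ is immediate from that of $\dot v$ on $G$ together with the bijectivity of $\chi_{\widehat 1}$, and the energy estimate on $M$ follows from the one on $G$ by applying the identities $\|v(t)\|_{L^2(M)} = \|\dot v(t)\|_{L^2(G)}$, $\|v(0)\|_{L^2(M)} = \|\dot u_0\|_{L^2(G)}$, and $\|f(t)\|_{L^2(M)} = \|\dot f(t)\|_{L^2(G)}$. The main obstacle I anticipate is the descent step: one must verify with some care that the lifted operator $K(t)$ truly commutes with the right $K$-action and that the invariance of $\dot v$ can be deduced from uniqueness, which requires knowing that $\dot v \circ R_k$ lives in the same regularity class $X_m$ in which uniqueness was established. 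This, however, is a formal consequence of the smoothness of the right $K$-translation, so no genuine new analytic difficulty arises beyond what is already contained in Theorem~\ref{theoexist}.
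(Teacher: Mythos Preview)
Your proposal is correct and follows precisely the route the paper indicates: the paper gives no separate proof here, only the sentence ``We may apply the ideas of Corollaries \ref{coro1} and \ref{coro2} to obtain the following corollary to Theorem \ref{theoexist}'', and your lift--solve--descend argument via $\chi_{\widehat 1}$ is exactly that idea carried out in detail.

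One small point worth tightening in the descent step: the claim that the extended operator $K(t)=\text{Op}(\sigma_K)$ on $C^\infty(G)$ commutes with every right translation $R_k$ is not automatic from its construction---the symbol of $R_k^{-1}K(t)R_k$ is $\xi(k)^{-1}\sigma_K(xk^{-1},\xi)\xi(k)$, which need not equal $\sigma_K(x,\xi)$. The easiest fix is either (i) to replace $K(t)$ by its $K$-average $K^{\text{inv}}(t)=\int_K R_k^{-1}K(t)R_k\,dk$, which agrees with $K(t)$ on $C^\infty(G)^K$, still lies in $\Psi^m_{\rho,\delta}(G)$ with positive semi-definite $\sigma_{-K^{\text{inv}}}$ (as an average of unitary conjugates of positive semi-definite matrices), and genuinely commutes with each $R_k$; or (ii) to bypass descent altogether by rerunning the proof of Proposition \ref{propestimate} and Theorem \ref{theoexist} verbatim on $M$, using Corollary \ref{coro2} in place of Theorem \ref{maintheo}. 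Either adjustment is routine and does not change the substance of your argument.
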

We exemplify an application of Theorem \ref{theoexist} to prove existence and uniqueness to the following system of differential equations.\\

        Denote by $\Xi=(I+\mathcal{L}_G)^{1/2}\in\Psi^1_{1,0}(G)$ the Bessel potential of order $1$.
        Consider the vector-valued Cauchy problem given by
\begin{equation}\label{CauchyExample}
    \begin{cases}
        \mathlarger{\frac{d}{dt}} \begin{pmatrix}
            u_1(t) \\ u_2(t)
        \end{pmatrix}
        -
        \begin{pmatrix}
         -\Xi&-\Xi\\
         -\Xi&-\Xi
        \end{pmatrix}
        \begin{pmatrix}
            u_1(t) \\ u_2(t)
        \end{pmatrix}
        =
        \begin{pmatrix}
            0 \\ 0
        \end{pmatrix},
        \\
        \begin{pmatrix}
            u_1(0) \\ u_2(0)
        \end{pmatrix}
        =\begin{pmatrix}
            u_{10} \\ u_{20}
        \end{pmatrix}.
    \end{cases}
\end{equation}
Viewing the matrix $K(t)\equiv K=\begin{pmatrix}
        -\Xi&-\Xi\\
         -\Xi&-\Xi
        \end{pmatrix}$ as pseudo-differential operator in the class $\Psi^1_{1,0}(G\otimes \text{End}(\mathbb{C}^2))$, its symbol is given by
        \begin{equation*}
           \sigma_{-K}(x,\xi)= \begin{pmatrix}
         \langle\xi\rangle I_{d_\xi}&\langle\xi\rangle I_{d_\xi}\\
         \langle\xi\rangle I_{d_\xi}&\langle\xi\rangle I_{d_\xi}
        \end{pmatrix},
        \end{equation*}
        for every $(x,[\xi])\in G\times \widehat{G}$. It is clear that in this case $\sigma_{-K}$ satisfies the previous hypothesis and so the last theorem holds true for the Cauchy problem \eqref{CauchyExample}. With a simple computation one may find a solution given by
        \begin{equation}
    \begin{cases}\label{solution}
            \widehat{u_1}(t,\xi)=\frac{1}{2}e^{-2\langle\xi\rangle t}({\widehat{u_{10}}(\xi)+\widehat{u_{20}}(\xi)})-\frac{1}{2}({\widehat{u_{10}}(\xi)-\widehat{u_{20}}(\xi)}) \\ \widehat{u_2}(t,\xi)=\frac{1}{2}e^{-2\langle\xi\rangle t}({\widehat{u_{10}}(\xi)+\widehat{u_{20}}(\xi)})+\frac{1}{2}({\widehat{u_{10}}(\xi)-\widehat{u_{20}}(\xi)}).   
        \end{cases}
        \end{equation}
        Theorem \ref{theoexist} then implies that this is the unique solution to \eqref{CauchyExample}, and that it satisfies the estimates
        \begin{equation*}
            \|u_1(t)\|_{L^2(G)}^2+\|u_2(t)\|_{L^2(G)}^2\leq C'(\|u_{10}\|_{L^2(G)}^2+\|u_{20}\|_{L^2(G)}^2),
        \end{equation*}
        for some $C'>0$. Note that indeed, by squaring \eqref{solution}, adding both equations and applying Plancherel's Identity, one can easily obtain that the above inequality holds for $C'=\frac{1}{2}.$

\end{document}